\DeclareMathAlphabet{\mathpzc}{OT1}{pzc}{m}{it}
\newtheorem{theorem}{Theorem}[section]
\newtheorem*{claim*}{Claim}
\newtheorem{lemma}[theorem]{Lemma}
\newtheorem{lem}[theorem]{Lemma}
\newtheorem{cor}[theorem]{Corollary}
\newtheorem{prop}[theorem]{Proposition}
\theoremstyle{definition}
\newtheorem{definition}[theorem]{Definition}
\newtheorem{Def}[theorem]{Definition}
\theoremstyle{remark}
\newtheorem{Rmk}[theorem]{Remark}
\numberwithin{equation}{section}
\newcommand{\op}{\operatorname}
\newcommand{\be}{\begin{equation}}
\newcommand{\ee}{\end{equation}}
\newcommand{\Ga}{\Gamma}
\newcommand{\Z}{\mathbb Z}
\newcommand{\ga}{\gamma}
\newcommand{\la}{\lambda}
\newcommand{\La}{\Lambda}
\newcommand{\inte}{\op{int}}
\newcommand{\cal}{\mathcal}
\newcommand{\br}{\mathbb R}
\newcommand{\SO}{\op{SO}}
\newcommand{\PSL}{\op{PSL}}
\newcommand{\F}{\cal F}
\newcommand{\bH}{\mathbb H}
\newcommand{\G}{\Gamma}
\renewcommand{\frak}{\mathfrak}
\newcommand{\e}{\varepsilon}
\newcommand{\fa}{\mathfrak a}
\renewcommand{\i}{\op{i}}
\renewcommand{\S}{\mathbb S}
\newcommand{\id}{\op{id}}
\renewcommand{\P}{\mathbb{P}}
\newcommand{\fg}{\frak g}
\newcommand{\ft}{\F_\theta}
\renewcommand{\epsilon}{\e}
\renewcommand{\t}{\theta}
\newcommand{\SL}{\op{SL}}
\renewcommand{\t}{\theta}
\title[Bi-Lipschitz rigidity]{
Bi-Lipschitz rigidity  of discrete subgroups }
\author{Richard Canary}
\address{Department of Mathematics,  University of Michigan, Ann Arbor, MI}
\email{canary@umich.edu}
\author{Hee Oh}
\address{Department of Mathematics, Yale University, New Haven, CT 06511 and Korea Institute for Advanced Study, Seoul}
\email{hee.oh@yale.edu}
\author{Andrew Zimmer}
\address{Department of Mathematics, University of Wisconsin-Madison, Madison, WI}
\email{amzimmer2@wisc.edu}
\thanks{Canary is partially supported by the NSF grant No. DMS-2304636.
Oh is partially supported by the NSF grant No. DMS-1900101. Zimmer is partially supported by a Sloan research fellowship and NSF grant No. DMS-2105580.}
\begin{document}

\maketitle
\begin{abstract} We obtain a bi-Lipschitz rigidity theorem for a Zariski dense discrete subgroup of a connected simple real algebraic group. As an application, we show that any Zariski dense
discrete subgroup of a higher rank semisimple algebraic group $G$ cannot have a $C^1$-smooth slim limit set in $G/P$ for any non-maximal parabolic subgroup $P$.
\end{abstract}

\section{Introduction}

For $i=1,2$, let $G_i$ be a connected simple real algebraic group and $\Ga_i$ a Zariski dense discrete subgroup of $G_i$. Let $$\rho:\Ga_1\to \Ga_2$$ be an isomorphism. The classical rigidity problem  searches for a condition on $\rho$ which guarantees that  $\rho$ is algebraic, that is, it extends to a Lie group isomorphism $G_1\to G_2$. 

If $\Ga_1$ is a lattice in $G_1$ and either 
\begin{itemize}
\item $G_1=G_2$ has rank one and is not locally isomorphic to $\PSL_2(\br) $, or
    \item $G_1$ has higher rank,   
\end{itemize} 
 then
 {\it any} isomorphism $\rho:\Ga_1\to \Ga_2$ is algebraic by celebrated theorems of Mostow, Prasad, and Margulis (\cite{Mo}, \cite{Pr}, \cite{Ma}). On the other hand, there are very few rigidity theorems for non-lattice discrete subgroups, especially in higher rank.
In this article, 
we provide a rigidity criterion $\rho:\Ga_1\to \Ga_2$ in terms of a $\rho$-boundary map  between the limit sets of $\Ga_1$ and $\Ga_2$.

Since $\Ga_i$ is Zariski dense, there exists a unique $\Ga_i$-minimal subset $\La_i$
in $\F_i=G_i/P_i$ for a  parabolic subgroup $P_i$ of $G_i$, called the limit set. When both parabolic subgroups are maximal, our result takes the following simple form:
\begin{theorem}[Bi-Lipschitz rigidity theorem I] \label{m00} 
Assume that  $P_1$ and $P_2$ are maximal parabolic subgroups.
Let  $\rho:\Ga_1\to \Ga_2 $ be an isomorphism. If there exists a bi-Lipschitz $\rho$-equivariant map $f:\La_1\to \La_2$, then $\rho$ extends to a Lie group isomorphism $$\bar \rho: G_1\to G_2$$ which induces a diffeomorphism $\bar f: \F_1\to \F_2$ such that $\bar f|_{\La_1}=f$.\end{theorem}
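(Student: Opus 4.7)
The plan is to upgrade the bi-Lipschitz map $f$ to a global diffeomorphism of flag varieties by combining Patterson--Sullivan theory with Zariski density. Briefly, one transports a Patterson--Sullivan (PS) measure from $\La_1$ to $\La_2$ via $f$, uses its uniqueness to obtain an identity of Busemann cocycles, extracts a scalar spectral matching between $\ga$ and $\rho(\ga)$, and finally algebraicizes $\rho$ by examining the Zariski closure of its graph in $G_1 \times G_2$.

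For the first stage, since each $P_i$ is maximal parabolic, the Busemann cocycle $\beta_i : G_i \times \F_i \to \R$ is scalar-valued. PS theory for the Zariski dense discrete subgroup $\Ga_i \subset G_i$ then produces a $(\Ga_i, \delta_i \beta_i)$-conformal density $\mu_i$ supported on $\La_i$, where $\delta_i$ is the corresponding critical exponent. The bi-Lipschitz property of $f$, together with the fact that such PS densities are comparable to the appropriate Hausdorff measure on $\La_i$ with respect to a natural visual metric, implies both $\delta_1 = \delta_2$ and that $f_*\mu_1$ is comparable to $\mu_2$. Combined with the $\rho$-equivariance of $f$, the measure $f_*\mu_1$ is itself a $(\Ga_2, \delta_1 \beta_2)$-conformal density on $\La_2$, and uniqueness of such densities (a consequence of ergodicity of the associated BMS-type measure) forces $f_*\mu_1 = c\,\mu_2$ for some $c>0$.

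This identity, together with conformality of both $\mu_1$ and $\mu_2$, yields the Busemann cocycle identification
\[
\beta_1(\ga, \xi) \;=\; \beta_2(\rho(\ga), f(\xi)) \qquad \text{for all } \ga \in \Ga_1, \ \xi \in \La_1.
\]
Specializing $\xi$ to the attracting fixed point of a loxodromic element produces a scalar spectral matching: the simple-root component of the Jordan projection of $\rho(\ga)$ equals that of $\ga$ for every loxodromic $\ga \in \Ga_1$.

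To finish, let $H \subset G_1 \times G_2$ be the Zariski closure of the graph $\{(\ga, \rho(\ga)) : \ga \in \Ga_1\}$. Zariski density of each $\Ga_i$ forces both projections $H \to G_i$ to be surjective. The scalar spectral matching above, together with simplicity of $G_1$ and $G_2$, rules out both $H = G_1 \times G_2$ and any non-trivial twisted subdirect product, leaving $H$ to be the graph of a Lie group isomorphism $\bar\rho: G_1 \to G_2$. The induced diffeomorphism $\bar f: \F_1 \to \F_2$ agrees with $f$ on the $\Ga_1$-orbit of any attracting loxodromic fixed point by construction, and hence on $\La_1$ by minimality and continuity. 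I expect the last step to be the main difficulty: passing from a purely boundary-level cocycle identity to a genuine algebraic extension of $\rho$. This is where the full strength of bi-Lipschitz (as opposed to merely quasi-symmetric or H\"older) regularity must be used --- it is precisely bi-Lipschitz control that allows the pushforward of a PS density to land back in the PS class and so trigger the uniqueness and rigidity mechanisms above.
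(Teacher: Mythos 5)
There is a genuine gap in the middle of your argument: the Patterson--Sullivan machinery you invoke is not available at the stated level of generality. The theorem assumes only that $\Ga_i<G_i$ is a Zariski dense \emph{discrete} subgroup --- no geometric finiteness, divergence type, or Anosov hypothesis. For such groups it is in general false (or at best unknown) that a conformal density of dimension $\delta_i$ is comparable to Hausdorff measure on $\La_i$, or that $\delta_i$ equals the Hausdorff dimension of $\La_i$; this already fails for geometrically infinite groups in rank one, and in higher rank the relevant growth data is a cone of linear forms rather than a single exponent. Consequently you cannot conclude $\delta_1=\delta_2$ from bi-Lipschitzness, nor that $f_*\mu_1$ is comparable to $\mu_2$. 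The uniqueness of $(\Ga_2,\delta\beta_2)$-conformal densities, which you need to promote comparability to the exact identity $f_*\mu_1=c\,\mu_2$ and hence to the cocycle identity, is likewise a strong ergodicity statement known only for restricted classes of groups. Since every subsequent step (the Busemann identity, the spectral matching, the analysis of the Zariski closure $H$) rests on these inputs, the proof does not go through as written.

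Your endgame is, however, close in spirit to the actual argument: the paper also passes to the self-joining $\Ga=(\id\times\rho)(\Ga_1)<G_1\times G_2$ and derives a contradiction with Benoist's theorem (Theorem \ref{ben}) on the non-empty interior of the limit cone, concluding non-Zariski-density of $\Ga$ and hence algebraicity of $\rho$ via \cite{DK1}. The difference is that the obstruction is obtained without any measure theory: Proposition \ref{no} shows directly that a $\Ga$-equivariant bi-H\"older map between limit sets attached to disjoint sets of simple roots forces the identity $\kappa\,\alpha_1(\lambda(\ga))=\alpha_2(\lambda(\ga))$ for a loxodromic $\ga$ chosen (via Benoist) to violate it. The key pointwise input replacing your cocycle identity is Lemma \ref{lox}, which computes the exponential contraction rate of $\ga^n$ toward its attracting fixed point in $\F_\alpha$ as $-\alpha(\lambda(\ga))$ using Tits representations --- a statement valid for every loxodromic element of every Zariski dense discrete subgroup. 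If you want to repair your proof, replace the PS/cocycle stage with this dynamical contraction-rate comparison; the measure-theoretic route would require additional standing hypotheses on $\Ga_1$ and $\Ga_2$ that the theorem does not make.
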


Recall that  $f:\La_1\to \La_2$  is bi-Lipschitz if there exists 
$C\ge 1$ such that  for all $\xi, \eta\in \La_1$,
 \be\label{biL} C^{-1} d_{\F_1}(\xi, \eta) \le d_{\F_2} (f(\xi), f( \eta)) \le  C d_{\F_1}(\xi, \eta)\ee 
 where $d_{\F_i}$ is a Riemannian metric on $\F_i$ for $i=1,2$. Since any two Riemannian metrics on $\F_i$ are bi-Lipschitz equivalent to each other, this notion is well-defined. We note that there can be at most one $\rho$-equivariant map $f:\La_1\to \La_2$ \cite[Lemma 4.5]{KO3}.   We emphasize that we do not require $f$ to be defined on all of  $\F_1$, but only on $\La_1$.
 For $G_1=G_2=\SO(n,1)^\circ$, $n\ge 2$, Theorem \ref{m00} was proved by Tukia \cite[Theorem D]{Tu2}.
 
\begin{Rmk}
 \begin{enumerate}  \item 
The hypothesis that $G_1$ and $G_2$ are simple is necessary; see Remark \ref{semi}.
\item The global bi-Lipschitz hypothesis on $f$ can be replaced by the condition that
$f$ is bi-Lipschitz on some non-empty open subset of $\La_1$; see Lemma \ref{global}.
 \end{enumerate} 
\end{Rmk}

We now state a general version of Theorem \ref{m00}
where  $P_1$ and $P_2$ are arbitrary parabolic subgroups.
\begin{theorem}[Bi-Lipschitz rigidity theorem II] \label{m0} 
 Let
  $\rho:\Ga_1\to \Ga_2 $ be an isomorphism. 
If there exists a bi-Lipschitz $\rho$-equivariant map $f:\La_1\to \La_2$, 
then $\rho$ extends to a Lie group isomorphism 
$$\bar \rho: G_1\to G_2.$$

Moreover, there exists a parabolic subgroup $P_2'$ of $G_2$ containing $P_2$
  such that $\bar\rho(P_1)\subset P_2'$ up to a conjugation and
 the smooth submersion $G_1/P_{1}\to G_2/P_{2}'$ induced by $\bar\rho$ coincides
 with the composition $\pi\circ f$ on $\La_{1}$ where $\pi:G_2/P_{2}\to G_2/P_{2}'$ is the canonical factor map.
\end{theorem}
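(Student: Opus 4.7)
The plan is to reduce the present theorem to the maximal-parabolic case already handled by Theorem~\ref{m00}. As a preliminary, observe that $f$ is in fact a bi-Lipschitz homeomorphism onto $\Lambda_2$: injectivity follows from the bi-Lipschitz bound, and since $\rho(\Ga_1) = \Ga_2$, the set $f(\Lambda_1)$ is a non-empty closed $\Ga_2$-invariant subset of $\Lambda_2$, hence equals $\Lambda_2$ by minimality.

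For each maximal parabolic $Q_2 \subset G_2$ containing $P_2$, let $\pi_{Q_2} : G_2/P_2 \to G_2/Q_2$ be the canonical projection and set $\Lambda_{Q_2} := \pi_{Q_2}(\Lambda_2)$, which is the $\Ga_2$-limit set in $G_2/Q_2$. The composition $f_{Q_2} := \pi_{Q_2} \circ f : \Lambda_1 \to \Lambda_{Q_2}$ is a Lipschitz $\rho$-equivariant surjection. The central claim is that there is a unique maximal parabolic $Q_1 = Q_1(Q_2) \subset G_1$ containing $P_1$ such that $f_{Q_2}$ factors through the canonical projection $\pi_{Q_1} : G_1/P_1 \to G_1/Q_1$ as a bi-Lipschitz $\rho$-equivariant map $g_{Q_2} : \Lambda_{Q_1} \to \Lambda_{Q_2}$. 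Establishing this factorization is the main obstacle. I would attack it by studying the local structure of $\Lambda_1$ near a loxodromic fixed point $\xi \in \Lambda_1$: in a small neighborhood of $\xi$, $\Lambda_1$ admits a product-like decomposition reflecting the horospherical factors of $P_1$, and its intersections with the fibers of the various maximal projections $\pi_{Q_1^\alpha}$ have precise Hausdorff-dimensional profiles determined by the dynamics of $\Ga_1$. The bi-Lipschitz hypothesis transfers these profiles exactly to $\Lambda_2$ at $f(\xi)$, and matching against the analogous profile of $\Lambda_2$ relative to $\pi_{Q_2}$ pins down the unique $Q_1$ whose fibers are collapsed by $f_{Q_2}$; equivariance then propagates the factorization globally, and the bi-Lipschitz bound on transverse directions yields the required bi-Lipschitz property of $g_{Q_2}$.

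Given the claim, Theorem~\ref{m00} applied to each $g_{Q_2}$ produces a Lie group isomorphism $\bar\rho_{Q_2} : G_1 \to G_2$ extending $\rho$. Zariski density of $\Ga_1$ ensures uniqueness of any such extension, so all the $\bar\rho_{Q_2}$ coincide with a single $\bar\rho$. Taking $P_2'$ to be a parabolic of $G_2$ containing $P_2$ and (after conjugation) $\bar\rho(P_1)$---realized concretely as the intersection of the maximal parabolics $Q_2$ appearing above---the induced smooth submersion $G_1/P_1 \to G_2/P_2'$ then agrees with $\pi \circ f$ on $\Lambda_1$ directly from the compatibility of the $g_{Q_2}$'s with the diffeomorphisms $G_1/Q_1(Q_2) \to G_2/Q_2$ supplied by Theorem~\ref{m00}.
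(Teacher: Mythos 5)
Your reduction rests on a central claim that is not only unproven but actually false, and the paper's own Section~6 provides the counterexample. You assert that for \emph{each} maximal parabolic $Q_2\supset P_2$ the composition $\pi_{Q_2}\circ f$ factors through some maximal $Q_1\supset P_1$ as a \emph{bi-Lipschitz} equivariant map. Take the group $\Ga<\SL_8(\br)$ of Proposition~\ref{exam}, with $P_1=P_{\alpha_3}$ (already maximal), $P_2=P_{\{\alpha_1,\alpha_3\}}$, and $f=p^{-1}$ the bi-Lipschitz inverse of the projection $\La_{\{\alpha_1,\alpha_3\}}\to\La_{\alpha_3}$. For the maximal parabolic $Q_2=P_{\alpha_1}\supset P_2$, the map $\pi_{Q_2}\circ f:\La_{\alpha_3}\to\La_{\alpha_1}$ is Lipschitz but not bi-Lipschitz, and since $P_1$ is maximal the only candidate factorization is the map itself; so no bi-Lipschitz $g_{Q_2}$ exists. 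This is precisely why the theorem must introduce a possibly larger parabolic $P_2'$: the conclusion holds only for \emph{some} maximal parabolics containing $P_2$, and identifying which ones already requires the algebraic extension $\bar\rho$. Your recipe $P_2'=\bigcap Q_2$ over all maximal $Q_2\supset P_2$ would return $P_2'=P_2$, contradicting the remark following Theorem~\ref{m0}. The proposed mechanism for the factorization (``Hausdorff-dimensional profiles'' of $\La_1$ along fibers near loxodromic fixed points, a ``product-like decomposition reflecting horospherical factors'') is also not something available for a general Zariski dense discrete subgroup, whose limit set carries no such local product structure. There is additionally a circularity: in the paper Theorem~\ref{m00} is itself a corollary of Theorem~\ref{rigid2} (the general version of Theorem~\ref{m0}), so it cannot serve as the engine of a proof of Theorem~\ref{m0}.

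The actual argument runs in a different direction entirely. One forms the self-joining $\Ga=(\id\times\rho)(\Ga_1)$ inside $G=G_1\times G_2$, observes that $P_{\theta_1}\times G_2$ and $G_1\times P_{\theta_2}$ correspond to disjoint sets of simple roots of $G$, and invokes Proposition~\ref{no} --- whose proof combines Benoist's non-empty-interior theorem for the limit cone with the contraction-rate estimates of Lemma~\ref{lox} coming from Tits representations --- to conclude that $\Ga$ cannot be Zariski dense. Simplicity of $G_1$ and $G_2$ then forces $\rho$ to extend to $\bar\rho:G_1\to G_2$, and a second application of Proposition~\ref{no} to $f\circ\tilde\Phi^{-1}$ shows $\theta_0\cap\theta_2\neq\emptyset$, which is what produces $P_2'=P_{\theta_0\cap\theta_2}$; the final identification of the submersion with $\pi\circ f$ on $\La_1$ is the uniqueness statement of Lemma~\ref{un}. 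None of these ingredients appear in your proposal, and without some substitute for the limit-cone obstruction there is no route to the algebraic extension.
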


$$\begin{tikzcd}
    \La_1 \arrow[r, "f"] \arrow[d, hook] \arrow[rd, phantom, "\circlearrowleft"] & \La_2 \arrow[d, "\pi"] \\
    G_1/P_1 \arrow[r, " \bar \rho"'] & G_2/P_2'
\end{tikzcd}$$

See Theorem \ref{rigid2} for a stronger version which relaxes the bi-Lipschitz condition to a $\kappa$-bi-H\"older condition for $\kappa>0$.
\begin{Rmk}
 In general, $P_2'$ is not the same as $P_2$.   We use the theory of hyperconvex subgroups to construct a Zariski dense discrete subgroup of $\SL_8(\br)$
 which demonstrates this point in Proposition \ref{exam}.\end{Rmk}

Theorem \ref{m0} also has consequences for the regularity of the limit set 
of $\Ga$ in $G/P$ when $G$ is a higher rank semisimple real algebraic group and
$P$ is a non-maximal parabolic subgroup. 

\begin{theorem}[Regularity of slim limit sets] \label{nm}
    Let $G$ be a connected semisimple real algebraic group of rank at least $2$ and $P$  a non-maximal parabolic subgroup of $G$.  Any Zariski dense discrete subgroup of $G$ cannot have a slim limit set in $G/P$ which is a $C^1$-submanifold.
\end{theorem}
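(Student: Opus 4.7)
The plan is to argue by contradiction using Theorem \ref{m0}. Suppose $\G < G$ is Zariski dense with a slim limit set $\La \subset G/P$ that is a $C^1$-submanifold. First I would reduce to the case that $G$ is simple: writing $G = G_1 \times \cdots \times G_k$ with $P = P_1 \times \cdots \times P_k$, some $P_i$ is non-maximal in $G_i$ because $P$ is; projecting $\G$ onto $G_i$ (or onto a combination of factors chosen so that the rank remains $\geq 2$) yields a Zariski dense subgroup whose limit set is the image of $\La$, and the slim and $C^1$-submanifold properties descend to this projection.

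With $G$ simple, since $\La$ is a $C^1$-submanifold of $G/P$ the identity map $\id:\La\to\La$ is bi-Lipschitz with respect to the induced Riemannian metric, so Theorem \ref{m0} applies to $\rho=\id$ and $f=\id$. Zariski density of $\G$ forces the algebraic extension $\bar\rho:G\to G$ to be the identity, and the theorem outputs a parabolic $P'\supseteq P$ (up to conjugation) such that the canonical smooth projection $\pi:G/P\to G/P'$ coincides with $\pi\circ f=\pi|_\La$ on $\La$. The crux is that $P'$ must be strictly larger than $P$. Once this is established, $\pi|_\La:\La\to G/P'$ is a $\G$-equivariant smooth submersion whose image, by minimality, is the limit set $\La'\subset G/P'$; the $\G$-invariance of $\La$ together with minimality then forces the fibers of $\pi|_\La$ to coincide with the full fibers of $\pi$, so $\La=\pi^{-1}(\La')$ is a pullback from $G/P'$, contradicting slimness.

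The main obstacle is verifying that $P'\supsetneq P$ strictly: in the proof of Theorem \ref{m0}, the parabolic $P'$ is built to capture the algebraic constraint coming from the boundary map, and one must show it cannot be $P$ itself in this situation. One approach is to analyze the tangent distribution: since $\La$ is a proper $C^1$-submanifold of $G/P$ (proper because it is slim), the assignment $x\mapsto T_x\La$ is a proper $\G$-invariant subbundle of $T(G/P)$, and the stabilizer of its fiber at a point $x=gP$ is a parabolic strictly containing $g P g^{-1}$, which should coincide (up to conjugation) with the $P'$ produced by the theorem. If this direct analysis is too delicate, a backup approach is to replace $f=\id$ by the Gauss map $x\mapsto T_x\La$ into an appropriate Grassmannian bundle, identified with a flag variety $G/Q$, and apply Theorem \ref{m0} (or its bi-H\"older strengthening Theorem \ref{rigid2}) to this auxiliary $\G$-equivariant boundary map to obtain the same contradiction.
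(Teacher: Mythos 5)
Your proposal has two genuine gaps, and it misses the mechanism by which the slimness hypothesis actually enters. First, the reduction to simple $G$ by projecting $\Gamma$ onto a factor $G_i$ fails: the projection of a Zariski dense \emph{discrete} subgroup of a product need not be discrete (think of irreducible lattices in $G_1\times G_2$, whose factor projections are dense), so you cannot speak of its limit set in the sense used here, and moreover the two maximal parabolics witnessing slimness may live over different factors, so no single factor retains the relevant structure. This reduction is also unnecessary: Proposition \ref{no} is stated and proved for semisimple $G$, and the paper works with it directly.

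Second, and more seriously, applying Theorem \ref{m0} with $\rho=\id$ and $f=\id$ yields no information. The engine of Theorem \ref{m0}/\ref{rigid2} is that the self-joining $(\id\times\rho)(\Gamma_1)$ in $G_1\times G_2$ cannot be Zariski dense in the presence of an equivariant bi-Lipschitz boundary map; when $\rho=\id$ the self-joining is the diagonal, which is never Zariski dense, so the theorem's hypothesis-versus-conclusion tension evaporates, $\bar\rho=\id$, and the parabolic $P_2'$ it outputs is just $P$ itself (in the notation of the proof, $\theta_0=\theta_2$ so $\Theta_2=\theta_2$). You correctly identify that your argument hinges on showing $P'\supsetneq P$, but this is false in your application, and neither of your two fallback sketches (the tangent distribution or the Gauss map) is carried out. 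The final step, that minimality forces $\La=\pi^{-1}(\La')$, is also unjustified and generally false (for Anosov subgroups the projections between limit sets are bijections). The paper's actual route is different and does use slimness essentially: slimness provides two distinct simple roots $\alpha_1,\alpha_2\in\theta$ such that the canonical projections $\F_\theta\to G/P_{\alpha_i}$ are \emph{injective} on $\La_\theta$; the $C^1$ hypothesis, together with $\Gamma$-minimality and a constant-rank argument (Lemma \ref{dif}), upgrades each injective $C^1$ map to a diffeomorphism onto $\La_{\alpha_i}$; composing gives a $\Gamma$-equivariant bi-Lipschitz map $\La_{\alpha_1}\to\La_{\alpha_2}$ between limit sets attached to disjoint sets of simple roots, which contradicts Proposition \ref{no} (whose proof rests on Benoist's theorem that the limit cone has non-empty interior together with the contraction-rate computation of Lemma \ref{lox}). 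Theorem \ref{m0} is a sibling application of Proposition \ref{no}, not a tool from which Theorem \ref{nm} can be extracted by taking $\rho=\id$.
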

 Note that any non-maximal parabolic subgroup  $P$ is contained in at least two non-conjugate maximal parabolic subgroups of $G$.
 We call a subset $S\subset G/P$
{\it slim} if there exists a pair of non-conjugate maximal parabolic subgroups $P_1, P_2$ containing $P$ such that the canonical factor map $\pi_i:G/P\to G/P_i$ is injective on $S$ for $i=1,2$.

$$\begin{tikzcd}[column sep=tiny]
    & G/P \arrow[ld, "\pi_1"'] \arrow[rd, "\pi_2"] & \\
    G/P_1 & & G/P_2
\end{tikzcd}$$
In particular, the limit set of any subgroup of a $P$-Anosov or relatively $P$-Anosov subgroup is always slim. More
generally, if any two points in the limit set are in general position, then the limit set is slim.

The non-maximal hypothesis on $P$ in Theorem \ref{nm} is necessary, as there are many Zariski dense discrete subgroups of $\PSL_n(\br)$, $n\ge 3$, whose limit sets are $C^1$-submanifolds of $\P(\br^n)$, e.g., images of Hitchin \cite{La} and Benoist representations \cite{Ben2}. 
We remark that the limit sets of these examples are not $C^2$ as shown by Zimmer \cite{Zi}.  

\begin{Rmk}\begin{enumerate}
 \item When $G$ is of rank one, the limit set $\La$ of a Zariski dense subgroup of $G$ is not a proper $C^r$-submanifold of $G/P$
where $r=1$ for $G=\SO(n,1)^\circ$ and $r=2$
for other rank one groups (\cite[Proposition 3.12 and Corollary 3.13]{Wi}). 
In higher rank,
there exists $0<r<\infty$, depending on $G$, such that  $\La$ is not a proper  $C^r$-submanifold of $G/P$ for any parabolic subgroup $P$ \cite[Lemma 2.11]{ELO}. 

\item Theorem \ref{nm} was previously established for images of Hitchin representations
\cite[Corollary 6.1]{Tsouvalas} and  for images of
$(1,1,2)$-hyperconvex representation of a 
surface group \cite[Corollary 7.7]{PS}.
We also mention \cite{GM}, \cite{ZZ}, and \cite{PSW2} for related work on the regularity of the limit set for certain classes of subgroups of $G=\SO(d,2)$, $\PSL_d(\br)$ and $\SO(p,q)$ respectively.  
\end{enumerate}
\end{Rmk}

\subsection*{On the proofs}
We deduce Theorem \ref{m0} and Theorem \ref{nm} from the following property of limit sets of a Zariski dense subgroup in higher rank:
 \begin{prop}\label{m1}
     Let $G$ be a connected semisimple real algebraic group of rank at least $2$. 
      Let  $Q_1$ and $Q_2$  be a pair of parabolic subgroups of $G$ such that
      there is no parabolic subgroup of $G$ containing $Q_1$ and a conjugate of $Q_2$ (e.g., a pair of non-conjugate maximal parabolic subgroups).
     
      If $\Ga<G$ is a Zariski dense
     discrete subgroup,
     then there is no $\Ga$-equivariant bi-Lipchitz map between the limit sets of $\Ga$ on $G/Q_1$ and $G/Q_2$.
      \end{prop}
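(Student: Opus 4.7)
The plan is to assume for contradiction that such a $\Gamma$-equivariant bi-Lipschitz map $f:\La_1 \to \La_2$ exists, and to derive an identity of positive roots on the limit cone of $\Gamma$ that is incompatible with the hypothesis on $Q_1, Q_2$. Fix a Cartan subspace $\fa \subset \Lie(G)$ with positive Weyl chamber $\fa^+$ and simple roots $\Delta$. Writing $\theta_i \subset \Delta$ for the type of $Q_i$ (with the convention that maximal parabolics have singleton type), the hypothesis translates to $\theta_1 \cap \theta_2 = \emptyset$.

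The main step is dynamical. Pick a loxodromic $\gamma \in \Gamma$ whose Jordan projection $\mu := \lambda(\gamma)$ is regular in $\fa^+$, and let $\xi_i^+ \in G/Q_i$ be its attracting fixed point; by equivariance $f(\xi_1^+) = \xi_2^+$. The action of $\gamma^n$ on $T_{\xi_i^+}(G/Q_i)$ contracts along root subspaces with eigenvalues $e^{-n\alpha(\mu)}$ as $\alpha$ ranges over $\Sigma_{\theta_i}^+$, the positive roots whose simple-root expansion involves at least one element of $\theta_i$. For a point $\eta \in \La_1$ close to $\xi_1^+$, the decay
\[
d_1(\gamma^n \eta, \xi_1^+) \asymp e^{-n \alpha_1(\mu)}
\]
holds, where $\alpha_1 \in \Sigma_{\theta_1}^+$ is the root minimizing $\alpha(\mu)$ over the root subspaces into which $\eta$'s tangent direction at $\xi_1^+$ projects nontrivially; similarly $d_2(\gamma^n f(\eta), \xi_2^+) \asymp e^{-n \alpha_2(\mu)}$ for some $\alpha_2 \in \Sigma_{\theta_2}^+$. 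Combining equivariance of $f$ with the bi-Lipschitz bound then forces $\alpha_1(\mu) = \alpha_2(\mu)$. To pin down a usable $\eta$, I would take $\eta = \xi_1^+(\gamma')$ for a second loxodromic $\gamma' \in \Gamma$ in general position with $\gamma$, which is available by Zariski density of $\Gamma$ and the density of loxodromic fixed points in $\La_1$.

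To upgrade to a linear identity I would apply Benoist's theorem on limit cones: the set of Jordan projections $\{\lambda(\gamma) : \gamma \in \Gamma \text{ loxodromic}\}$ lies dense in a closed convex cone $\L(\Gamma) \subset \fa^+$ of non-empty interior. Restrict to a small open subcone $\mathcal{U} \subset \L(\Gamma)$ on which the argmin roots $\alpha_1$ and $\alpha_2$ remain constant; the identity $\alpha_1(\mu) = \alpha_2(\mu)$ then holds on a dense subset of $\mathcal{U}$, forcing $\alpha_1 = \alpha_2$ as linear functionals on $\fa$. Since the simple roots form a basis of $\fa^*$, the decompositions $\alpha_1 = \sum_\beta m_\beta^{(1)} \beta$ and $\alpha_2 = \sum_\beta m_\beta^{(2)} \beta$ must agree coefficient by coefficient. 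But $\alpha_1 \in \Sigma_{\theta_1}^+$ forces some $m_\beta^{(1)} > 0$ with $\beta \in \theta_1$, and matching with $\alpha_2 \in \Sigma_{\theta_2}^+$ forces the same $\beta \in \theta_2$, yielding a simple root in $\theta_1 \cap \theta_2$ and contradicting the hypothesis.

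The main obstacle I anticipate is the dynamical step, since $\La_1$ is typically not a smooth submanifold of $G/Q_1$: extracting a well-defined tangent direction of $\eta$ at $\xi_1^+$ and the associated contraction rate $\alpha_1$ requires a rescaling argument (blowing up $\La_1$ near $\xi_1^+$ by $\gamma^{-n}$ and taking a tangent-cone limit). The required transversality — that the tangent cone of $\La_1$ at $\xi_1^+$ is rich enough to realize the slowest relevant contraction — should follow from Zariski density of $\Gamma$, providing an abundance of loxodromic fixed points with Cartan data in general position. A secondary technical point is that $G$ is only semisimple rather than simple; here one applies Benoist's theorem factor by factor, using that the limit cone projects with non-empty interior onto each simple factor supporting the roots $\alpha_1, \alpha_2$.
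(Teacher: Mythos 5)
There is a genuine gap in your concluding step, and it is not cosmetic. Even granting all the dynamical claims, the identity $\alpha_1=\alpha_2$ with $\alpha_1\in\Sigma^+_{\theta_1}$ and $\alpha_2\in\Sigma^+_{\theta_2}$ does \emph{not} contradict $\theta_1\cap\theta_2=\emptyset$. Membership in $\Sigma^+_{\theta_i}$ only requires that the simple-root expansion involve \emph{some} element of $\theta_i$; it does not require every simple root appearing in the expansion to lie in $\theta_i$. Consequently a single positive root can lie in $\Sigma^+_{\theta_1}\cap\Sigma^+_{\theta_2}$: in $\SL_3(\br)$ with $\theta_1=\{\alpha_1\}$, $\theta_2=\{\alpha_2\}$, the highest root $\alpha_1+\alpha_2$ is in both sets. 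Your sentence ``matching with $\alpha_2\in\Sigma^+_{\theta_2}$ forces the same $\beta\in\theta_2$'' is where the argument breaks: matching coefficients only shows that the expansion of $\alpha_2$ involves some $\beta\in\theta_1$, which is entirely compatible with $\alpha_2\in\Sigma^+_{\theta_2}$. A secondary concern is that your ``argmin root'' $\alpha_1$ depends on the tangent direction of $\eta$ at the fixed point of $\gamma$, so ``the argmin roots remain constant on a subcone of the limit cone'' is not well posed as $\gamma$ varies; and the two-sided asymptotic $d_1(\gamma^n\eta,\xi_1^+)\asymp e^{-n\alpha_1(\mu)}$, which you correctly flag as the main obstacle, is exactly the part that needs a real argument.

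The paper's proof repairs both defects simultaneously by forcing the relevant exponents to be values of \emph{simple} roots. For each simple $\alpha\in\theta_i$ it embeds $G/P_\alpha$ into $\P(V_\alpha)$ via the Tits representation, whose second-highest weight is $\chi_\alpha-\alpha$ with $\alpha$ simple; the condition that $\xi$ have nonzero projection to the highest and second-highest weight spaces is Zariski open, hence met on a nonempty open subset of $\La_1$ by Zariski density and minimality, and at such $\xi$ the contraction rate in $d_\alpha$ is \emph{exactly} $-\alpha(\lambda(\gamma))$ (Lemma \ref{lox}(2)) --- no tangent cone or blow-up is needed. On the target side only the upper bound $\limsup\frac1n\log d_\alpha\le-\alpha(\lambda(\gamma))$ (Lemma \ref{lox}(1)) is used. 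This yields $\min_{\alpha\in\theta_2}\alpha(\lambda(\gamma))\le\min_{\alpha\in\theta_1}\alpha(\lambda(\gamma))$, and applying the same argument to $f^{-1}$ gives equality of these two minima of simple-root values. The element $\gamma$ is chosen \emph{first}, via Benoist's theorem, so that $\alpha(\lambda(\gamma))\ne\alpha'(\lambda(\gamma))$ for all $\alpha\in\theta_1$, $\alpha'\in\theta_2$; this is possible because distinct simple roots are linearly independent, so $\bigcup\ker(\alpha-\alpha')$ is a finite union of genuine hyperplanes missed by the interior of the limit cone. That is the contradiction. To salvage your approach you would need to prove the transversality statement that $\eta$ (and $F(\eta)$) can be chosen to see the slowest root space, i.e.\ that your argmin roots are the simple roots realizing $\min_{\alpha\in\theta_i}\alpha(\mu)$ --- which is in effect what the paper's weight-space argument accomplishes.
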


Indeed, if $\rho$ in Theorem \ref{m0} does not extend to a Lie group isomorphism $G_1\to G_2$, then the following self-joining subgroup
\be\label{self} \Ga=(\id\times \rho)(\Ga_1)=\{(g, \rho(g)):g\in \Ga_1\}\ee  is a Zariski dense subgroup
of the product $G=G_1\times G_2$. On the other hand,  a bi-Lipschitz  map $f$ as in Theorem \ref{m0} yields a bi-Lipschitz homeomorphism between  the limit sets
of the self-joining group $\Ga$ in $G/(P_1\times G_2)$ and $G/(G_1\times P_2)$, which then gives a desired contradiction by Proposition \ref{m1}. 
We mention the recent work \cite{KO} and \cite{KO2} on related rigidity theorems
which use the idea of self-joinings.

If $\Ga$ has a $C^1$-slim limit set in $G/P$ as in Theorem \ref{nm} and $P_1$ and $P_2$ are non-conjugate maximal parabolic subgroups containing $P$, we get a bi-Lipschitz map between the limit sets of
$\Ga$ in $G/P_1$ and $G/P_2$ from the slimneess hypothesis. Therefore Proposition \ref{m1} implies Theorem \ref{nm}.

For the proof of Proposition \ref{m1}, we relate the exponential contraction rates of loxodromic elements $\gamma\in \Ga$ on $G/Q_i$ with the Jordan projections of the image of $\ga$ under Tits representations of $G$. This part of the argument is motivated by earlier work of Zimmer \cite[Section 8]{Zi}.
We then show that the bi-Lipschitz equivalence of the limit sets gives an obstruction to Benoist's theorem \cite{Ben} on
the non-empty interior property of
the limit cone  of a Zariski dense subgroup
(see the proof of Proposition \ref{no}).

\subsection*{Acknowledgement} We would like to thank Dongryul Kim for helpful comments on a preliminary version of this paper.

\section{Preliminaries}
Unless mentioned otherwise, let $G$ be a connected semisimple {\it real} algebraic group throughout the paper. This means that
$G$ is the identity component $\bf G(\br)^\circ$ for a 
 semisimple algebraic group $\bf G$  defined over $\br$. A parabolic $\br$-subgroup
$\mathbf P$ of $\bf G$ is a proper algebraic subgroup defined over $\br$ such that the quotient $\bf G /\bf P$ is a projective  algebraic variety.
A parabolic subgroup
$P$ of $G$ is of the form $\mathbf P(\br)$ for a parabolic $\br$-subgroup $\mathbf P$ of $\bf G$; in this case, the quotient $G/P$ is equal to $(\bf G/\bf P)(\br)$ and is a real projective variety, called a $G$-boundary \cite{Bo}. 
Any parabolic subgroup $P$ is conjugate to a unique standard parabolic subgroup of $G$, once we fix a root system associated to $G$.

To be precise,  let $A$ be a maximal real split torus of $G$. The rank of $G$
is defined as the dimension of $A$.
Let $\fg$ and $\fa$ respectively denote the Lie algebras of $G$
and $A$. Fix a positive Weyl chamber $\fa^+ \subset \fa$ and set $A^+=\exp \fa^+$, and a maximal compact subgroup $K< G$ such that the Cartan decomposition $G=K A^+ K$ holds. We denote by $M$ the centralizer of $A$ in $K$.
For $g\in G$, we denote by $\mu(g)$ the Cartan projection of $g$, which is the unique element of $\fa^+$ such that $g\in K \exp \mu(g) K$.

Any $g\in G$ can be written as the commuting product $g=g_hg_e g_u$ where $g_h$ is hyperbolic, $g_e$ is elliptic and $g_u$ is unipotent. 
	The hyperbolic component $g_h$ is conjugate to a unique element $\exp \lambda(g) \in A^+$ and \be\label{Jordan} \lambda(g)\in \fa^+\ee  is called 
	the Jordan projection of $g$.
	When $\lambda(g)\in \inte \fa^+$, $g\in G$ is called {\it{loxodromic}} in which case $g_u$ is necessarily trivial and $g_e$ is conjugate to
	an element $m\in M$.
 
Let $\Phi=\Phi(\fg, \fa)$ denote the set of all roots and $\Pi$  the set of all  simple roots given by the choice of $\fa^+$. 
 The Weyl group $\cal W$ is given by $N_K(A)/M$ where  $N_K(A)$ is  the normalizer of $A$ in $K$.
 
Consider the real vector space $\mathsf E^*=\mathsf X(A)\otimes_\Z \br$
where $\mathsf X(A)$ is the group of all real characters of $A$ and let $\mathsf E$ be its dual.
Denote by $( \cdot,\cdot )$ a $\cal W$-invariant inner product on $\mathsf E$.
 We
denote by $\{\omega_{\alpha}:\alpha\in \Pi\}$ the (restricted) fundamental weights of $\Phi$ defined by $$ 2\frac{(\omega_\alpha, \beta)}{(\beta,\beta)} = c_\alpha \delta_{\alpha, \beta}$$ where $c_\alpha=1$ if $2\alpha\notin \Phi$ and $c_\alpha=2$
otherwise.

 Fix an element $w_0\in N_K(A) $ of order $2$  representing the longest Weyl element so that $\op{Ad}_{w_0}\mathfrak a^+= -\mathfrak a^+$. 
The map $$\i= -\op{Ad}_{w_0}:\fa\to \fa $$ is called the opposition involution.
It induces an involution of $ \Phi$ preserving $\Pi$, for which we use the same notation $\i$, so that $\i (\alpha )  = \alpha \circ \i$ for all $\alpha\in \Phi$.

For a  non-empty subset $\theta$ of $ \Pi$, 
let $\fa_\theta =\cap_{\alpha\in \Pi-\theta} \ker \alpha$, and
let $ P_\theta$ denote a standard parabolic subgroup of $G$ corresponding to $\theta$; that is,
$P_{\theta}=L_\theta N_\theta$ where $L_\theta$ is the centralizer of $\exp \fa_\theta$  and $N_\theta$
is the unipotent radical of $P_\theta$ which is generated by
root subgroups associated to all positive roots which are not $\mathbb Z$-linear combinations of elements of $\Pi-\theta$. If $\theta=\Pi$, then $P=P_\Pi$ is a minimal parabolic subgroup.  For a singleton $\theta=\{\alpha\}$,
$P_\alpha$ is a maximal parabolic subgroup of $G$. Any parabolic subgroup $P$ is conjugate to
a unique standard parabolic subgroup $P_\theta$ for some non-empty subset $\theta\subset \Pi$.

We consider the $\theta$-boundary: $$\F_\theta=G/P_\theta.$$
We denote by $d_{\F_\theta}$ a Riemannian metric on $\F_\theta$.
 Let  $P_\theta^+ =w_0 P_{\i(\theta)}w_0^{-1}$, which is the standard parabolic subgroup opposite to $P_\theta$ such that $P_\theta\cap P_\theta^+=L_\theta$.
 Hence $\F_{\i(\theta)}= G/P_{\i(\theta)}=G/P_\theta^+.$
The $G$-orbit $\F_\theta^{(2)}=\{ (g P_\theta, gw_0P_{\i(\theta)}):g\in  G\}$ is the unique open $G$-orbit in $G/P_\theta\times G/P_\theta^+$ under the diagonal $G$-action.
 Two elements
$\xi\in \F_\theta$ and $\eta\in \F_{\i(\theta)}$ are said to be in general position if $(\xi, \eta)\in \F_\theta^{(2)}$.

\section{Contraction rates of loxodromic elements and Tits representations}

The first part of the following theorem immediately follows as a special case of a theorem of Tits \cite{Ti}, and the second part is remarked in \cite{Ben} and proved in \cite{Sm}.

\begin{theorem}[{\cite[Theorem 7.2]{Ti}, \cite[Lemma 2.13]{Sm}}] \label{tits} 
  For each $\alpha\in \Pi$, there exists an irreducible representation ${\rho}_{\alpha}: {G}\to \op{GL} (V_{\alpha}) $  whose  highest (restricted) weight 
    $\chi_{\alpha}$ is equal to $k_\alpha \omega_\alpha$
    for some positive integer $k_\alpha$ and whose highest weight space is one-dimensional.
   
    Moreover, all weights of $\rho_{\alpha}$ are $\chi_\alpha$, $\chi_\alpha-\alpha$ and
  weights of the form $\chi_{\alpha} - \alpha - \sum_{\beta \in \Pi} n_{\beta} \beta$ with $n_\beta$ non-negative integers. 
\end{theorem}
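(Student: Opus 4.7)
The plan is to prove the two assertions separately: the existence of $\rho_\alpha$ with one-dimensional highest weight space of weight $k_\alpha \omega_\alpha$, and the description of all other weights.

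For the existence statement, I would invoke Tits' theorem \cite{Ti} directly. The starting point is the dominant restricted weight $\omega_\alpha$, from which one considers the irreducible complex representation of $\mathfrak{g}_\mathbb{C}$ with this highest weight. This complex representation may fail to be defined over $\mathbb{R}$ due to Galois-cohomological obstructions (for instance, it could be quaternionic). Tits' key insight is that replacing $\omega_\alpha$ by a sufficiently divisible positive integer multiple $k_\alpha \omega_\alpha$ clears these obstructions, yielding the desired $\mathbb{R}$-rational irreducible representation $\rho_\alpha : G \to \op{GL}(V_\alpha)$. The one-dimensionality of the highest weight space is then a standard fact from highest weight theory: in any irreducible highest weight module, the highest weight occurs with multiplicity one.

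For the structure of the remaining weights, the decisive observation is that the defining relation for fundamental weights forces $2(\chi_\alpha, \beta)/(\beta, \beta) = 0$ for every simple root $\beta \neq \alpha$. Let $v_+ \in V_\alpha$ be a highest weight vector. For each $\beta \neq \alpha$, $\mathfrak{sl}_2$-theory applied to the triple supported by root vectors in $\mathfrak{g}_{\pm\beta}$ would show that the $\beta$-string through $\chi_\alpha$ in the direction of $-\beta$ has length zero, hence $f_\beta \cdot v_+ = 0$ for any $f_\beta \in \mathfrak{g}_{-\beta}$. By irreducibility, $V_\alpha$ is spanned by iterated applications of negative simple root vectors to $v_+$; since the only nonzero first application is $f_\alpha v_+$ (nonzero because $2(\chi_\alpha, \alpha)/(\alpha, \alpha) > 0$), every weight $\mu \neq \chi_\alpha$ takes the form $\chi_\alpha - \alpha - \sum_{\beta \in \Pi} n_\beta \beta$ with integers $n_\beta \geq 0$. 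In particular, $\chi_\alpha - \alpha$ itself occurs as the weight of $f_\alpha v_+$, which recovers the separately listed case in the theorem.

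The main obstacle lies in the first assertion: producing the $\mathbb{R}$-rational representation requires Tits' delicate Galois-cohomological analysis to determine the correct divisibility $k_\alpha$, but since this is precisely the content of the cited theorem, no new argument is needed beyond quoting it. For the second assertion, some care is needed when $2\beta$ is also a restricted root (so that the subalgebra generated by $\mathfrak{g}_{\pm\beta}$ is not literally $\mathfrak{sl}_2$); however, the reductive subalgebra generated by $\mathfrak{g}_{\pm\beta}$ and $\mathfrak{g}_{\pm 2\beta}$ still admits a highest weight theory that forces $f_\beta v_+ = 0$ whenever $\chi_\alpha$ vanishes on the relevant coroot, so the conclusion on the weight structure is unaffected.
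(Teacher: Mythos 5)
The paper offers no proof of this statement at all: it is imported verbatim, with the first assertion attributed to Tits and the second to Smilga (following a remark of Benoist), so there is no internal argument to measure yours against. Your handling of the first assertion correctly defers to Tits, but one justification is wrong as stated: the one-dimensionality of the highest weight space is \emph{not} ``a standard fact from highest weight theory'' here, because the weights in question are \emph{restricted} weights (characters of $\fa$, not of a full Cartan subalgebra), and the highest restricted weight space of an irreducible real representation is in general multi-dimensional. For instance, the standard representation of $\SL_2(\c)$ viewed as a real group is irreducible over $\br$ with highest restricted weight equal to the fundamental weight, yet its highest restricted weight space is two-dimensional; one must pass to $k=2$ (the representation on Hermitian forms, realizing $\SL_2(\c)\to\SO(3,1)^\circ$) to obtain a line. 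Producing a representation that is simultaneously $\br$-rational, irreducible, of highest weight $k_\alpha\omega_\alpha$, \emph{and} proximal is precisely the content of Tits' theorem; the last property cannot be peeled off as a formality.

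Your argument for the second assertion is the standard one and is correct in outline: $V_\alpha=U(\mathfrak{n}^-)v_+$, the negative unipotent radical is generated by the simple negative restricted root spaces, and the vanishing $\fg_{-\beta}v_+=0$ for all $\beta\in\Pi\setminus\{\alpha\}$ forces every weight other than $\chi_\alpha$ to lie in $\chi_\alpha-\alpha-\sum_{\beta\in\Pi}\N\beta$, while $\fg_{-\alpha}v_+\ne 0$ produces the weight $\chi_\alpha-\alpha$. The step to tighten is the vanishing itself: for restricted roots the spaces $\fg_{\pm\beta}$ may be multi-dimensional, and for $X\in\fg_{-\beta}$ the bracket of $X$ with its image under the Cartan involution need not lie in $\fa$ (it can have a nontrivial component in the centralizer of $\fa$ in $\fk$), so there is no canonical $\mathfrak{sl}_2$-triple ``supported by root vectors in $\fg_{\pm\beta}$''; your string argument, and the patch for $2\beta\in\Phi$, are looser than they appear. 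A cleaner route avoids $\mathfrak{sl}_2$-theory entirely: the set of restricted weights of $V_\alpha$ is invariant under the restricted Weyl group $N_K(A)/M$, since $N_K(A)$ permutes the $\fa$-weight spaces. Because $(\chi_\alpha,\beta)=0$ for $\beta\ne\alpha$, the reflection $s_\beta$ sends $\chi_\alpha-\beta$ to $\chi_\alpha+\beta$ and $\chi_\alpha-2\beta$ to $\chi_\alpha+2\beta$, neither of which can be a weight since $\chi_\alpha$ is highest; hence $\chi_\alpha-\beta$ and $\chi_\alpha-2\beta$ are not weights and $\fg_{-\beta}v_+=\fg_{-2\beta}v_+=0$. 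With that substitution your proof of the second assertion is complete.
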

These representations are called Tits representations of $G$. Fix $\alpha \in \Pi$ and, as before, set $\F_\alpha=G/P_\alpha$. We denote by $V_1$ and $V_2$ the weight spaces of $\rho_\alpha$ for the highest weight $\chi_\alpha$ and  the second highest weight $\chi_\alpha -\alpha$ respectively. We have $\dim V_1=1$
and $\dim V_2\ge 1$. 
 If we set $\xi_\alpha=[P_\alpha]\in \F_\alpha$,
  the map $g\xi_\alpha  \mapsto  gV_1$ gives an embedding \be\label{pi} \F_\alpha \to  \P(V_{\alpha})\ee 
whose image is a closed subvariety. We may hence identify $\F_\alpha$ as a closed subvariety of $\P(V_\alpha)$.
Let $\langle \cdot, \cdot \rangle_{\alpha}$ be a $K$-invariant inner product on $V_{\alpha}$ with respect to which $A$ is symmetric and we have the orthogonal weight space decomposition of
$V_\alpha$. Using the norms on $V_\alpha$ and $\wedge^2 V_\alpha$ induced by this inner product, we get a $K$-invariant Riemannian metric $d_\alpha$ on $\P(V_\alpha)$:
$$d_\alpha ([v], [w])=\frac{\|v\wedge w\|}{\|v\| \|w\|}\quad \text{ for $[v], [w]\in \P(V_\alpha)$}.$$

Recall that an element $g\in G$ is {\it loxodromic} if there exist $a\in \inte A^+$  and $m\in M$ such that
$g= h_g am h_g^{-1}$ for some $h_g\in G$. The element $h_g$ is then uniquely determined modulo $AM$ and $\lambda(g)=\log a\in \inte \fa^+$. 

Let $\pi_i=\pi_{\alpha, i}: V_\alpha\to V_i$ be the orthogonal projection for $i=1,2$.
Recall the following standard lemma:
\begin{lem}\label{att} Let $g $
be a loxodromic element of $G$. For $\xi\in \F_\alpha$, we have $\pi_{1}(h_g^{-1} \xi)\ne 0$ if and only if
$g^n \xi$ converges to $ h_g\xi_\alpha $ as $n\to \infty$. 
\end{lem}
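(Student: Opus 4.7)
The plan is to work inside the Tits representation $\rho_\alpha$, using the embedding $\F_\alpha \hookrightarrow \P(V_\alpha)$ under which $\xi_\alpha = [V_1]$. Writing $g = h_g a m h_g^{-1}$ with $a = \exp \lambda(g) \in \inte A^+$ and $m \in M$, we have $g^n \xi = h_g (am)^n (h_g^{-1}\xi)$, so the statement reduces to the corresponding one for the dynamics of $am$ on $\P(V_\alpha)$: for a nonzero $v \in V_\alpha$ representing the line $h_g^{-1}\xi$, show $[(am)^n v] \to [V_1]$ if and only if $\pi_1(v) \ne 0$.

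The decisive structural fact is that $M$ centralizes $A$ and acts orthogonally via the $K$-invariant inner product, so every weight space of $\rho_\alpha|_A$ is preserved by $m$; in particular the orthogonal splitting $V_\alpha = V_1 \oplus V_1^\perp$ is $(am)$-invariant. The ``only if'' direction then follows at once: if $\pi_1(v) = 0$, then $v \in V_1^\perp$, hence $(am)^n v \in V_1^\perp$ for every $n$, and the orbit in $\P(V_\alpha)$ stays inside the closed subvariety $\P(V_1^\perp)$, which does not contain the point $[V_1] = \xi_\alpha$. Pulling back by $h_g$, the sequence $g^n \xi$ stays bounded away from $h_g \xi_\alpha$.

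For the ``if'' direction, decompose $v = v_1 + v_\perp$ with nonzero $v_1 \in V_1$ and $v_\perp \in V_1^\perp$. Since $V_1$ is one-dimensional, $a$ acts there by the scalar $e^{\chi_\alpha(\lambda(g))}$ and $m$ by a scalar of modulus one, giving $\|(am)^n v_1\| = e^{n\chi_\alpha(\lambda(g))}\|v_1\|$ with the projective direction $[v_1] = [V_1]$ fixed. By the second part of Theorem \ref{tits}, every other weight has the form $\chi_\alpha - \alpha - \sum_{\beta \in \Pi} n_\beta \beta$ with $n_\beta \ge 0$; since $\lambda(g) \in \inte \fa^+$ makes all simple roots strictly positive on $\lambda(g)$, each such weight $\chi$ satisfies $\chi(\lambda(g)) \le (\chi_\alpha - \alpha)(\lambda(g))$, and combining with the orthogonality of $m$ on each weight space yields $\|(am)^n v_\perp\| \le C\, e^{n(\chi_\alpha - \alpha)(\lambda(g))}$. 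After dividing by $e^{n\chi_\alpha(\lambda(g))}$, the $V_1$-component has constant norm $\|v_1\|$ while the $V_1^\perp$-component decays like $e^{-n\alpha(\lambda(g))} \to 0$, forcing $[(am)^n v] \to [V_1] = \xi_\alpha$, and hence $g^n \xi \to h_g \xi_\alpha$. There is no serious obstacle; the argument is a routine contraction estimate made transparent by the $m$-invariance of the weight spaces and the precise description of the weights of $\rho_\alpha$ supplied by Theorem \ref{tits}.
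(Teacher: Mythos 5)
Your argument is correct, and it is essentially the computation the paper itself relies on: the paper states Lemma \ref{att} without proof as a standard fact, but the immediately following proof of Lemma \ref{lox} carries out the very same reduction to $g=am$, the same weight-space decomposition of a representing vector $v$ using Theorem \ref{tits}, and the same orthogonality/dominant-weight estimate showing the non-highest components decay by at least $e^{-n\alpha(\log a)}$ relative to the $V_1$-component. Your additional ``only if'' half (the $(am)$-invariance of $\P(V_1^\perp)$, which is compact and misses $[V_1]$) is the standard complement and is also fine.
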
 The point $y_\alpha^g: =h_g \xi_\alpha \in \F_\alpha$ is called the attracting fixed point of $g$.

\begin{lem} \label{lox} Let $g\in G$ be a loxodromic element and $\alpha\in \Pi$.
\begin{enumerate}
    \item For all $\xi\in \F_\alpha$ with $\pi_{ 1}(h_g^{-1} \xi)\ne 0$, we have
$$ -\alpha (\lambda (g))\ge \limsup_{n\to \infty}\frac{1}{n}\log d_\alpha (g^n \xi, y_\alpha^g). $$

\item  For all $\xi\in \F_\alpha$ with $\pi_{ 1}(h_g^{-1}\xi)\ne 0$ and $\pi_{ 2}(h_g^{-1}\xi)\ne 0$,  we have
$$ -\alpha (\lambda (g))= \lim_{n\to \infty}\frac{1}{n}\log d_\alpha (g^n \xi, y_\alpha^g). $$

\end{enumerate}
\end{lem}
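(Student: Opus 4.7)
The plan is to conjugate by $h_g$ to reduce to the case where $g = am$ with $a \in \inte A^+$ and $m \in M$, and then carry out a direct calculation in the weight-space decomposition of $V_\alpha$ provided by the Tits representation $\rho_\alpha$. Conjugation gives $g^n \xi = h_g(am)^n(h_g^{-1}\xi)$ and $y_\alpha^g = h_g \xi_\alpha$. Since $h_g$ acts on the compact manifold $\F_\alpha \subset \P(V_\alpha)$ as a smooth diffeomorphism, it is globally bi-Lipschitz with respect to $d_\alpha$, so substituting $\eta := h_g^{-1}\xi$ for $\xi$ alters $d_\alpha(g^n\xi, y_\alpha^g)$ only by a multiplicative constant, which disappears after $\frac{1}{n}\log$. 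Hence it suffices to prove both assertions assuming $g = am$ and $y_\alpha^g = \xi_\alpha$.

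Decompose $V_\alpha = V_1 \oplus V_2 \oplus V_r$, where $V_r$ is the sum of the weight spaces for all weights $\chi$ other than $\chi_\alpha$ and $\chi_\alpha - \alpha$. By Theorem \ref{tits}, every such $\chi$ has the form $\chi_\alpha - \alpha - \sum_{\beta \in \Pi} n_\beta \beta$ with $\sum n_\beta \ge 1$, and since $\log a \in \inte \fa^+$ makes every simple root strictly positive on $\log a$, we have $\chi(\log a) < (\chi_\alpha - \alpha)(\log a)$. The three summands are pairwise orthogonal. Because $M$ centralizes $A$ and lies in the compact group $K$, it preserves each weight space and acts on it by an isometry; crucially, since $V_1$ is one-dimensional, $M$ acts on $V_1$ by a scalar of absolute value $1$, so $v_0 \wedge m^n v_0 = 0$ for every $v_0 \in V_1$. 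This is the key identity that will cause the leading $\chi_\alpha$-term to drop out of the wedge product below.

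Fix a unit vector $v_0 \in V_1$ with $\xi_\alpha = [v_0]$, and, using $\pi_1(\eta)\ne 0$, lift $\eta$ to $v = v_0 + v_2 + v_r \in V_\alpha$ with $v_2 = \pi_2(v)$ and $v_r \in V_r$. Since $a$ and $m$ commute and preserve the decomposition,
\begin{equation*}
(am)^n v \;=\; e^{n\chi_\alpha(\log a)}\, m^n v_0 \;+\; e^{n(\chi_\alpha-\alpha)(\log a)}\, m^n v_2 \;+\; u_n,
\end{equation*}
with $u_n \in V_r$ and $\|u_n\| = O\bigl(e^{n\chi'(\log a)}\bigr)$ for some weight $\chi'$ with $\chi'(\log a) < (\chi_\alpha - \alpha)(\log a)$. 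Orthogonality of weight spaces together with $\|m^n v_0\| = 1$ gives $\|(am)^n v\| \sim e^{n\chi_\alpha(\log a)}$, and using $v_0 \wedge m^n v_0 = 0$, $V_1 \perp V_2$ and $V_1 \perp V_r$, we obtain
\begin{equation*}
\bigl\| v_0 \wedge (am)^n v \bigr\| \;\le\; \|v_2\|\, e^{n(\chi_\alpha-\alpha)(\log a)} \;+\; O\bigl(e^{n\chi'(\log a)}\bigr),
\end{equation*}
with the first term dominating (up to lower order) when $v_2 \ne 0$. Dividing by $\|v_0\|\,\|(am)^n v\|$ yields $d_\alpha((am)^n \eta, \xi_\alpha) \le C\, e^{-n\alpha(\log a)}$ unconditionally, proving (1); and when $\pi_2(\eta) \ne 0$, i.e. $v_2 \ne 0$, the leading term $\|v_2\|\, e^{-n\alpha(\log a)}$ is genuine, proving (2). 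The only mildly delicate point is the identity $v_0 \wedge m^n v_0 = 0$, which rests on the one-dimensionality of the highest-weight space asserted in Theorem \ref{tits}.
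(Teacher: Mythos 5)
Your proof is correct and takes essentially the same approach as the paper's: reduce to $g=am\in AM$ with $\log a\in \inte\fa^+$ and compute the contraction rate directly in the weight-space decomposition of the Tits representation $\rho_\alpha$. The only cosmetic difference is that you estimate $d_\alpha$ via the wedge-product formula, using $v_0\wedge m^n v_0=0$ (one-dimensionality of $V_1$) to kill the top term, whereas the paper projects $g^n v$ to the affine chart $\{w:\pi_1(w)=v_1\}$ and uses that $d_\alpha$ is bi-Lipschitz to the chart metric near $V_1$; both give the same asymptotics in parts (1) and (2).
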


\begin{proof}
It suffices to prove the claim when $h_g=e$, i.e., $g=am\in AM$ with $\log a\in \inte \fa^+$. 
  Considering $\xi\in \F_\alpha\subset \P(V_\alpha)$, choose a vector $v\in V_\alpha$ representing $\xi$.  List  all distinct weights
  of $\rho_\alpha$ given by Theorem \ref{tits} as follows:
  $\chi_1=\chi_\alpha$, $\chi_2=\chi_\alpha-\alpha$, and
  $\chi_i=\chi_\alpha -\alpha -\beta_i$, $3\le i \le \ell $; in particular,  $\beta_i\ne 0$ is a non-negative integral linear combinations of simple roots. Let $V_i$ denote the weight space corresponding to $\chi_i$ and
 write $v=v_1+v_2+ \cdots + v_\ell$ so that $v_i\in V_i$ for each $1\le i\le \ell$.
  Suppose that $\pi_1(\xi)\ne 0$, that is $v_1 \neq 0$. 
  We may then assume that $v_1$ is a unit vector relative to $\langle \cdot, \cdot \rangle_{\alpha}$.
  Since $M$ commutes with $A$, $M$ stabilizes each weight subspace, and in particular, $M v_1=\pm v_1$.
  Now $$g^n v=  e^{n \chi_{\alpha}(\log a)} m^n v_1 +  e^{n (\chi_{\alpha}-\alpha) (\log a)} m^n v_2
  + \sum_{i=3}^\ell e^{n (\chi_{\alpha}-\alpha-\beta_i) (\log a)} m^n v_i .$$
 
  Hence the projection $p(g^nv)$ of $g^n v$ to the affine chart $\mathbb A=\{w\in V_\alpha: \pi_1(w)=v_1\}$ is
  $$p(g^nv)= v_1 +  e^{-n \alpha (\log a)} m^n v_2'
  + \sum_{i=3}^\ell  e^{- n( \alpha+\beta_i) (\log a)} m^n v_i'$$ where $v_i'=\pm v_i$, depending on the sign of $m^n v_1$.
Note that $\lim g^n \xi = V_1$, and that the metric $d_\alpha$ on a neighborhood on $V_1$ in $\P(V_\alpha)$
  is bi-Lipschitz equivalent to the metric $d$ on the affine chart $\mathbb A$, obtained by restricting the distance on $V_\alpha$ induced by $\langle \cdot, \cdot \rangle_{\alpha}$.
  
Since the weight spaces are orthogonal, we have
  $$d ( p(g^n v), v_1)=  e^{-  n \alpha (\log a)} \left( \| v_2\|^2
  + \| w_n\|^2 \right)^{1/2}  $$
  where $w_n= \sum_{i=3}^\ell e^{- n\beta_i (\log a)} m^n v_i'$ and $\| \cdot \|$ is the norm induced by $\langle \cdot, \cdot \rangle_{\alpha}$.
    Since $\log a\in \inte \fa^+$ and hence $\beta_i(\log a)>0$ for all $3\le i\le \ell$, we have
    $$\lim_{n\to \infty} w_n = 0 .$$
    
     First consider the case when
     $\pi_2(\xi)= 0$, that is $v_2= 0$. Since $\log \|w_n\|<0$ for all large $n$, we have
   \begin{multline*}
        \limsup_{n\to \infty}  \frac{1}{n} \log d_\alpha (g^n\xi , y_\alpha^g) = \limsup_{n\to \infty}  \frac{1}{n} \log d(p(g^nv), v_1)\\
        =  \limsup_{n\to \infty} \frac{1}{n} ( -n\alpha (\log a) +\log \|w_n\|) \le -\alpha (\log a) .   \end{multline*} 

Now suppose that $\pi_2(\xi)\ne 0$, that is $v_2\ne 0$. Again since $w_n \to 0$, 
we have
$$  \lim_{n\to \infty}  \frac{1}{n} \log d_\alpha (g^n\xi , y_\alpha^g)  =\lim_{n\to \infty} \frac{1}{n} \log d(p(g^nv), v_1) = -\alpha(\log a) .$$
This finishes the proof.
\end{proof}

\section{Bi-Lipschitz rigidity of discrete subgroups}\label{sec:biLip rigidity...}

Let $G$ be a connected semisimple real algebraic group and $X=G/K$ be the associated Riemannian symmetric space and fix $o=[K]\in X$.

We consider the following notion of convergence of a sequence in $G$ to an element of $\F_\theta=G/P_\theta$ for a non-empty subset $\theta\subset \Pi$.

For a sequence $g_io \in X$  and $\xi\in \ft$, we write $\lim g_i o =\xi$ and
 say  $g_io \in X$ converges to $\xi$ if \begin{enumerate}
     \item $\min_{\alpha\in \theta} \alpha(\mu(g_i)) \to \infty$ as $i\to \infty$; and
\item $\lim_{i\to\infty} \kappa_{g_i}P_\theta= \xi$ in $\F_\theta$ for some $\kappa_{g_i}\in K$ such that $g_i\in \kappa_{g_i} A^+ K$.
 \end{enumerate}

\begin{definition}\label{l2} Let $\Gamma<G$ be a  discrete subgroup
and let $\F=G/P$ for a parabolic subgroup $P$. Let $\theta\subset \Pi$ be a unique subset such that $P$ is conjugate to $P_\theta$ and hence $\F=\F_\theta$.
The limit set of $\Ga$ in $\F_\theta$ is then defined as the set of all accumulation points of $\Ga(o)$ in $\F_\theta$:
$$ \La_\theta=\Lambda_\theta(\Ga)=\{\lim \ga_i(o)\in \F_\theta: \ga_i \in \Ga\}.$$
\end{definition}
It is a $\Ga$-invariant closed subset of $\F_\theta$, which is non-empty provided
$\Ga$ contains a sequence $\ga_i$ satisfying $\lim_{i\to \infty} \min_{\alpha\in \theta} \alpha(\mu(\ga_i)) = \infty$. 
If $\Ga$ is Zariski dense, $\La_\theta$ is the unique $\Ga$-minimal subset of $\F_\theta$ and can also be described as  the set of all $\xi\in \F_\theta$ such that
the Dirac measure $\delta_\xi$ is the weak limit of $(\ga_i)_* \op{Leb}_\theta$ for some sequence
$\ga_i\in \Ga$ where $\op{Leb}_\theta$ denotes the unique $K$-invariant probability measure on $\F_\theta$ (\cite{Ben}, \cite{Qu}). Moreover, if $\Theta\subset \theta$, then $\La_\Theta$ is equal to the image of $\La_\theta$ under the canonical projection $\F_\theta\to \F_\Theta$, by  minimality.

The limit cone of $\Ga$ is defined as the smallest closed cone of $\fa^+$ containing all Jordan projections of loxodromic elements of $\Ga$. 
\begin{theorem}[Benoist \cite{Ben}]\label{ben}
    If $\Ga<G$ is Zariski dense, its limit cone has non-empty interior in $\fa$.
\end{theorem}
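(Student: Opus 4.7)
The plan is to reduce the theorem to establishing two properties of the limit cone $\L(\Ga) \subseteq \fa^+$: (i) it is convex, and (ii) its linear span equals all of $\fa$. Together these imply $\L(\Ga)$ has non-empty interior in $\fa$, since any closed convex cone whose linear span is a finite-dimensional real vector space has non-empty interior in that space.

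For convexity (Step 1), I would take two loxodromic elements $g_1, g_2 \in \Ga$ and use Zariski density of $\Ga$ to find $\ga \in \Ga$ conjugating $g_2$ so that the attracting and repelling fixed points of $g_1$ and $\ga g_2 \ga^{-1}$ on the full flag variety $\F_\Pi$ are in generic position. A standard product estimate---proved through the Tits representations of Theorem \ref{tits} by tracking how dominant and subdominant eigendirections combine under products with transverse dynamics---then yields that for large enough $m,n \in \N$, the element $g_1^m(\ga g_2 \ga^{-1})^n$ is loxodromic with
\begin{equation*}
\lambda\bigl(g_1^m(\ga g_2 \ga^{-1})^n\bigr) = m\,\lambda(g_1) + n\,\lambda(g_2) + O(1),
\end{equation*}
where the $O(1)$ depends on $\ga$ but is uniform in $m,n$. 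Dividing by $m+n$ and letting $m, n \to \infty$ along any ratio $m/(m+n) \to t \in [0,1]$ places $t\,\lambda(g_1) + (1-t)\,\lambda(g_2)$ in $\L(\Ga)$, and closedness of the limit cone then gives full convexity.

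For spanning (Step 2), I would argue by contradiction: suppose $\L(\Ga) \subset V$ for a proper linear subspace $V \subsetneq \fa$, and choose $\chi \in \fa^*$ non-zero with $\chi|_V = 0$. Then $\chi(\lambda(\eta)) = 0$ for every loxodromic $\eta \in \Ga$. Writing $\chi$ in fundamental weights and applying the Tits representations $\rho_\alpha$ of Theorem \ref{tits}, the condition $\chi(\lambda(\eta)) = 0$ becomes a non-trivial polynomial identity in the absolute values of the eigenvalues of $\rho_\alpha(\eta)$, cutting out a proper Zariski-closed subset $Z \subsetneq G$ that contains every loxodromic element of $\Ga$. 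Since loxodromic elements form a non-empty Zariski-open subset of $G$, their intersection with $\Ga$ is Zariski dense in $G$ whenever $\Ga$ is, giving the required contradiction.

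The main obstacle is the product formula in Step 1: controlling $\lambda$ of products of two loxodromic elements with transverse dynamics, uniformly in large powers, is the technical heart of the argument. Benoist's original treatment uses $(r,\epsilon)$-proximal representations and a simultaneous ping-pong in all Tits representations; the uniformity in $m, n$ of the additive formula, together with ensuring that the product is genuinely loxodromic (not merely one with large Cartan projection), is what makes this step delicate.
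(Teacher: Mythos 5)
The paper does not prove this statement; it is imported verbatim from Benoist \cite{Ben}, so there is no internal proof to compare against and your sketch has to be judged against Benoist's actual argument. Your overall architecture (convexity of the limit cone via product estimates, plus the claim that the cone spans $\fa$) is the right shape, and Step 1 is essentially Benoist's convexity argument: for loxodromic $g_1$ and a conjugate of $g_2$ in general position, working in each Tits representation $\rho_\alpha$ gives $\omega_\alpha(\lambda(g_1^m g_2'^n)) = m\,\omega_\alpha(\lambda(g_1)) + n\,\omega_\alpha(\lambda(g_2)) + O(1)$, and since the $\omega_\alpha$ form coordinates on $\fa$ this yields the additive formula and hence convexity. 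You correctly flag the uniformity and the loxodromy of the product as the technical content there.

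Step 2, however, has a genuine gap. If $\chi = \sum_\alpha c_\alpha \omega_\alpha$ vanishes on the span of the limit cone, the resulting condition on a loxodromic $\eta$ is $\prod_\alpha \lambda_1(\rho_\alpha(\eta))^{c_\alpha/k_\alpha} = 1$, where $\lambda_1$ denotes the spectral radius. This is \emph{not} a polynomial identity and does not cut out a Zariski-closed subset of $G$: the coefficients $c_\alpha$ are arbitrary real numbers (the subspace $V$ need not be rational in the weight basis), and even for integer coefficients the spectral radius is the absolute value of a single eigenvalue, which is not a regular function of the group element. So the intended contradiction with Zariski density of the loxodromic elements of $\Ga$ does not materialize. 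This is precisely the difficulty that makes the non-empty-interior statement the hard half of Benoist's theorem: his argument replaces eigenvalues by norms, using that $\omega_\alpha(\mu(g)) = \log\|\rho_\alpha(g)\|$ is, up to a bounded error, $\tfrac12\log$ of a genuine polynomial in the entries of $g$ (the Hilbert--Schmidt norm squared), proves a dedicated lemma that a function of the form $\sum_i c_i \log|p_i|$ with $p_i$ regular and $c_i$ real which is bounded (above) on a Zariski-dense subset of an irreducible variety is bounded on the whole variety, and then transfers between Cartan and Jordan projections via $\lambda(\gamma) = \lim_n \mu(\gamma^n)/n$ and the coincidence of the two asymptotic cones. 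Without some substitute for that analytic lemma, your Step 2 does not close.
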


For $\kappa>0$ and $\theta_1, \theta_2\subset \Pi$,
a map $F:\La_{\theta_1}\to \La_{\theta_2}$ is called $\kappa$-bi-H\"older if there exists $C>0$ 
such that for all $x,y\in \La_{\theta_1}$  \be\label{ck} C^{-1} d_{\F_{\theta_1}} (x, y)^{\kappa} \le  d_{\F_{\theta_2}} (F(x),F(y))\le C d_{\F_{\theta_1}} (x, y)^{\kappa} \ee
where $d_{\F_{\theta_i}}$ is a Riemannian metric on $\F_{\theta_i}$ for $i=1,2$.
Observe that if $\Ga$ is Zariski dense, any $\Ga$-equivariant $\kappa$-bi-H\"older map \hbox{$\La_{\theta_1}\to \La_{\theta_2}$} is a homeomorphism; the minimality of $\La_{\theta_2}$ implies the surjectivity and the bi-H\"older property implies the injectivity. Therefore  $F$ is
$\kappa$-bi-H\"older if and only if  $F$ is $\kappa$-H\"older
and $F^{-1}$ is $\kappa^{-1}$-H\"older.

 Proposition \ref{m1} follows from the following for $\kappa=1$:
\begin{prop}\label{no} Let $\Ga<G$ be Zariski dense. Let $\theta_1$ and $\theta_2$ be disjoint non-empty subsets of $\Pi$.
Then for any $\kappa>0$, there exists no $\Ga$-equivariant  $\kappa$-bi-H\"older map $F:\La_{\theta_1}\to \La_{\theta_2}$.   

 
\end{prop}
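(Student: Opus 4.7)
The plan is to derive from a $\kappa$-bi-H\"older $\Ga$-equivariant map $F:\La_{\theta_1}\to\La_{\theta_2}$ the identity
\begin{equation*}
\min_{\alpha\in\theta_2}\alpha(\lambda(\ga))=\kappa\cdot\min_{\alpha\in\theta_1}\alpha(\lambda(\ga)) \qquad \text{for every loxodromic } \ga\in\Ga,
\end{equation*}
and to contradict this using Benoist's Theorem \ref{ben}. Since $\theta_1\cap\theta_2=\emptyset$ and distinct simple roots are linearly independent, each functional $\alpha_2-\kappa\alpha_1$ with $\alpha_i\in\theta_i$ is nonzero, so the locus defined by this identity is a finite union of proper hyperplanes in $\fa$. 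By continuity of $\min$ and density of Jordan projections in $\L_\Ga$, this locus would contain the entire limit cone, contradicting its non-empty interior.

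To establish the identity, I would fix a loxodromic $\ga\in\Ga$ and first argue that $F(y_{\theta_1}^\ga)=y_{\theta_2}^\ga$: $F$ is a $\Ga$-equivariant homeomorphism (surjective by minimality of $\La_{\theta_2}$, injective by bi-H\"olderness), so $F(y_{\theta_1}^\ga)$ is a $\ga$-fixed point in $\La_{\theta_2}$, and a standard dynamical comparison of the basins of attracting fixed points forces it to coincide with $y_{\theta_2}^\ga$ (using that each $\La_{\theta_i}$ is Zariski dense in $\F_{\theta_i}$, as its Zariski closure is $G$-invariant). Next, under the $G$-equivariant smooth embedding $\F_{\theta_i}\hookrightarrow\prod_{\alpha\in\theta_i}\F_\alpha$, the metric $d_{\F_{\theta_i}}$ is locally bi-Lipschitz equivalent to $\max_{\alpha\in\theta_i}d_\alpha$. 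Zariski density of $\La_{\theta_1}$ lets me pick $x\in\La_{\theta_1}$ whose projection to each $\F_\alpha$ ($\alpha\in\theta_1$) satisfies both nondegeneracy conditions of Lemma \ref{lox}(2); applying the lemma factorwise and combining yields
\begin{equation*}
\lim_{n\to\infty}\tfrac{1}{n}\log d_{\F_{\theta_1}}(\ga^n x,y_{\theta_1}^\ga)=-\min_{\alpha\in\theta_1}\alpha(\lambda(\ga)).
\end{equation*}
The $\kappa$-bi-H\"older hypothesis multiplies this limit by $\kappa$ along $\ga^n F(x)\to y_{\theta_2}^\ga$ on $\F_{\theta_2}$, while Lemma \ref{lox}(1) applied factorwise on $\F_{\theta_2}$ upper-bounds the latter by $-\min_{\alpha\in\theta_2}\alpha(\lambda(\ga))$; hence $\min_{\alpha\in\theta_2}\alpha(\lambda(\ga))\le\kappa\min_{\alpha\in\theta_1}\alpha(\lambda(\ga))$. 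The symmetric argument with $F^{-1}$ (which is $\kappa^{-1}$-bi-H\"older) applied to a generic $y\in\La_{\theta_2}$ yields the reverse inequality.

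The main obstacle is producing an exact exponential rate on the possibly fractal set $\La_{\theta_1}$, which requires (i) Zariski density of $\La_{\theta_1}$ in $\F_{\theta_1}$ to locate a point generic for $\ga$ in each factor $\F_\alpha$, (ii) the Tits-representation analysis of Lemma \ref{lox} to identify the dominant contraction rate with $\min_\alpha\alpha(\lambda(\ga))$, and (iii) the local bi-Lipschitz comparison of $d_{\F_{\theta_1}}$ with the max of the factor metrics. Once an exact rate is in hand on one side, the weaker upper bound on the other combined with the symmetric use of $F^{-1}$ closes the identity, and Benoist's theorem on the interior of the limit cone then yields the contradiction.
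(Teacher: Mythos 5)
Your proposal is correct and takes essentially the same route as the paper's proof: the same use of Benoist's non-empty-interior theorem against the finite union of hyperplanes $\ker(\alpha_2-\kappa\alpha_1)$, the identification $F(y_{\theta_1}^\ga)=y_{\theta_2}^\ga$ via basins and Zariski density of the limit sets, the factorwise application of Lemma \ref{lox} through the embedding into $\prod_\alpha\P(V_\alpha)$, and the symmetric argument with $F^{-1}$. The only cosmetic difference is that you derive the eigenvalue identity for every loxodromic element and then contradict Benoist, whereas the paper first selects a single loxodromic element avoiding the hyperplanes and derives the identity only for it.
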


\begin{proof} For simplicity, we write $\La_i=\La_{\t_i}$ and
$d_{\theta_i}=d_{\F_{\theta_i}}$.
   Let $F:\La_1\to \La_2$ be a $\Ga$-equivariant  homeomorphsim. Fix $\kappa>0$.
   Since $\theta_1\cap \theta_2=\emptyset$, the union $\bigcup_{\alpha_1\in \theta_1, \alpha_2\in \theta_2} \op{ker}(\kappa \alpha_1-\alpha_2)$ is a finite union of hyperplanes of $\fa$. Therefore by Theorem \ref{ben}, $\Ga$ contains a loxodromic element
    $\ga$ such that
    $$ \{\kappa \cdot \alpha(\la(\ga)):\alpha\in \t_1\}\cap \{ \alpha(\la(\ga)):\alpha\in \t_2\}=\emptyset.$$

For each $i=1,2$, let $\alpha_i\in \theta_i$ be such that 
\be\label{aii} \alpha_i(\lambda(\ga))=\min\{\alpha(\lambda(\ga)):\alpha\in \t_i\}.\ee 
Note that \be\label{choice} \kappa \cdot \alpha_1(\lambda(\ga))\ne \alpha_2(\lambda(\ga)) .\ee

\noindent{\bf Claim:} If $F^{-1}$ is $\kappa^{-1}$-H\"older,
  then 
    \be\label{al1}  \alpha_2(\la(\ga))\le  \kappa\cdot  \alpha_1(\la(\ga)).\ee

    By replacing $\Ga$ by a suitable conjugate,
    we may also assume that $\ga =am\in \Ga$ with $a\in \inte A^+$ and $m\in M$. For each $i=1,2$, let $y_i=y_{\alpha_i}^\ga$
    denote the attracting fixed point of $\ga$ in $\cal F_i$; we have $y_i\in \La_i$.
  As $\Ga$ is Zariski dense, $\La_i$ is Zariski dense in $\cal F_i$ for each $i=1,2$.  
   Let $\pi_{\alpha,1}$ and $ \pi_{\alpha,2}$ be as in Lemmas~\ref{att} and~\ref{lox} for each $\alpha\in \Pi$. Since the set $$\cal O=\{\xi\in \cal F_1: \pi_{\alpha, 1}(\xi)\ne 0, \pi_{\alpha, 2}(\xi)\ne 0 \text{ for all $\alpha\in \theta_1$}\}$$ is a Zariski open subset of $\cal F_1$, 
 the intersection $\cal O\cap \La_1$ is a non-empty open subset of $\La_1$.
As $F$ is a homeomorphism,  the image $F (\cal O\cap \La_1)$ is  a non-empty open subset of $\La_2$. Since $Z=\{\xi\in \F_2: \ga^n \xi\not \to y_2\text { as $n\to \infty$}\}$ is a proper Zariski closed subset of $\F_2$ by Lemma \ref{att}, 
$F(\cal O\cap \La_1) $ cannot be contained in $Z$; otherwise it would imply that $\La_2$ is contained in a proper Zariski closed subset by the $\Ga_2$-minimality of $\La_2$, which contradicts the Zariski density of $\Ga_2$.
Therefore there exists an element $\xi\in \cal O\cap \La_1$ such that  $\lim_{n\to \infty} \ga^n F(\xi)= y_2$.
 By the equivariance and continuity of $F$, we have
   \be\label{yy} F(y_1)=\lim F(\ga^n \xi)= \lim \ga^n F(\xi)=y_2.\ee 

Let $i=1,2$. Since $P_{\theta_i}=\bigcap_{\alpha\in \theta_i} P_{\alpha}$, 
we have a diagonal embedding 
$$\F_i=G/P_{\theta_i}\to  \prod_{\alpha\in \theta_i}\mathbb P(V_\alpha) $$
via the product of the maps in \eqref{pi}.
Consider the metric $d_i$ on $\F_i$ obtained as the restriction
of $\sum_{\alpha\in \theta_i} d_{\alpha}$ to $\F_i$: for $\eta=gP_{\theta_1}$
and $\eta'=g'P_{\t_2}$ with $g, g'\in G$,
$$d_i(\eta, \eta')=\sum_{\alpha\in \theta_i} d_{\alpha} (\eta, \eta') $$
where
$d_{\alpha}(\eta, \eta'):= d_{\alpha} (gV_{\alpha, 1}, g'V_{\alpha,1}) $
where $V_{\alpha,1}$ is the highest weight line of $\rho_\alpha$ as in \eqref{pi}.
Since $d_i$ is bi-Lipschitz equivalent to a Riemannian metric on $\F_i$, we have that
 $F^{-1}: (\La_2, d_{2})\to (\La_1, d_{1})$ is $\kappa^{-1}$-H\"older. 

Since $\xi\in \cal O$ and  $\lim \ga^n F(\xi)= y_2$,  we have  by Lemma \ref{lox} that
  $$-\alpha(\lambda(\ga))= \lim \frac{1}{n} \log d_{\alpha} (\ga^n\xi, y_1) \quad\text{for each $\alpha\in \theta_1$} $$
and $$-\alpha(\lambda(\ga))\ge \limsup\frac{1}{n} \log d_{\alpha} (\ga^n F(\xi), y_2) \text{ for each $\alpha\in \theta_2$} .$$

Since $d_{\alpha_1} (\eta, \eta')\le d_1 (\eta, \eta')$,
$d_2(\eta, \eta')\le \# \theta_2 \max_{\alpha\in \theta_2} d_\alpha (\eta, \eta')$, and $F^{-1}$ is $\kappa^{-1}$-H\"older,
we have
\begin{align}\label{las}
 -\alpha_1(\lambda(\ga)) &= \lim\frac{1}{n} \log d_{\alpha_1} (\ga^n \xi, y_1) \\  &\le
     \lim\frac{1}{n} \log d_{1} (\ga^n \xi, y_1)  
\notag \\    &\le  \kappa^{-1} \limsup\frac{1}{n} \log d_{2} ( F(\ga^n \xi), F(y_1))\notag \\ & = \kappa^{-1}\limsup\frac{1}{n} \log d_{2} (\ga^n F(\xi), y_2)  \notag \\&
= \kappa^{-1} \max_{\alpha\in \theta_2}
\limsup\frac{1}{n} \log d_{\alpha} (\ga^n F(\xi), y_2)
\notag \\ & \le -\kappa^{-1} \min_{\alpha\in \theta_2}\alpha(\lambda(\ga))=- \kappa^{-1}\alpha_2(\lambda(\ga))
 .\notag
\end{align} 

This implies that $\alpha_2(\la(\gamma))\le \kappa \alpha_1(\lambda(\ga))$, proving the claim. 

By switching the role of $\theta_1$ and $\theta_2$,
this claim then implies that 
if $F$ is $\kappa$-H\"older, then
$\alpha_1(\la(\ga))\le \kappa^{-1} \alpha_2(\la(\ga))$.
Therefore if $F$ is $\kappa$-bi-H\"older,
then $\kappa \cdot \alpha_1(\la(\ga))= \alpha_2(\la(\ga))$, contradicting \eqref{choice}.
This finishes the proof.
\end{proof}

The proof of Proposition \ref{no} shows the following as well:

\begin{prop}\label{noo}
    Let $\Ga<G$ be Zariski dense and 
    let $\theta_1, \theta_2\subset \Pi$ be non-empty
    disjoint subsets. Suppose that $\La_{\t_1}$ and $ \La_{\t_2}$ are $C^1$-submanifolds of $\F_{\theta_1}$ and $\F_{\theta_2}$ respectively.
     If $F:\La_{\t_1}\to \La_{\t_2}$ is a $\Ga$-equivariant homeomorphism, $F$ cannot be $C^1$ with non-vanishing Jacobian at any $\xi\in \cal A$, where 
      $\cal A\subset \La_{\t_1}$ is the set of all attracting fixed points
    of loxodromic elements $\ga\in \Ga$ such that $\{\alpha(\lambda(\ga)):\alpha\in \theta_1\}\cap \{\alpha(\lambda(\ga)):\alpha\in \theta_2\}=\emptyset$.
\end{prop}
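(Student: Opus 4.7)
The plan is to run the proof of Proposition \ref{no} with $\kappa=1$, but weakening the global bi-Lipschitz hypothesis to the purely local one that a non-vanishing Jacobian at $\xi$ provides. The essential point I need to exploit is that every estimate in the proof of Proposition \ref{no} uses $F$ only along one orbit $\ga^n \xi' \to y_{\t_1}^\ga$ accumulating at the attracting fixed point (and, symmetrically, along one orbit $\ga^n \eta' \to y_{\t_2}^\ga$ for $F^{-1}$), so a bi-Lipschitz comparison on an arbitrarily small neighborhood of that fixed point will suffice.

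\textbf{Setup.} I would argue by contradiction. Suppose $F$ is $C^1$ with non-vanishing Jacobian at some $\xi \in \cal A$. By definition of $\cal A$, I fix a loxodromic $\ga \in \Ga$ with $\xi = y_{\t_1}^\ga$ and $\{\alpha(\lambda(\ga)) : \alpha \in \t_1\} \cap \{\alpha(\lambda(\ga)) : \alpha \in \t_2\} = \emptyset$. After conjugating $\Ga$, I can assume $\ga = am$ with $\log a \in \inte \fa^+$ and $\xi = eP_{\t_1}$. Picking $\alpha_i \in \t_i$ that minimizes $\alpha(\lambda(\ga))$ on $\t_i$, the disjointness hypothesis then forces $\alpha_1(\lambda(\ga)) \ne \alpha_2(\lambda(\ga))$, and this strict inequality will be the source of the contradiction.

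\textbf{Main step.} Viewing $F$ as a $C^1$-map between $C^1$-submanifolds of equal dimension with invertible derivative at $\xi$, the inverse function theorem provides a neighborhood $U$ of $\xi$ in $\La_{\t_1}$ and a constant $C>0$ with
\[
C^{-1} d_1(\eta, \xi) \le d_2(F(\eta), F(\xi)) \le C d_1(\eta, \xi) \quad \text{for all } \eta \in U,
\]
where $d_1, d_2$ denote the sum metrics on $\F_{\t_1}, \F_{\t_2}$ used in the proof of Proposition \ref{no}, and an analogous estimate holds for $F^{-1}$ near $F(\xi)$. Following the Zariski-density argument in Proposition \ref{no} verbatim, I obtain $\xi' \in \cal O \cap \La_{\t_1}$ with $\ga^n F(\xi') \to y_{\t_2}^\ga =: y_2$, and equivariance then forces $F(\xi) = y_2$. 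Since $\ga^n \xi' \to \xi$, the orbit eventually lies in $U$; Lemma \ref{lox}(2) applied to each $\alpha \in \t_1$ yields $\lim \frac{1}{n} \log d_1(\ga^n \xi', \xi) = -\alpha_1(\lambda(\ga))$, and Lemma \ref{lox}(1) applied to each $\alpha \in \t_2$ yields $\limsup \frac{1}{n} \log d_2(\ga^n F(\xi'), y_2) \le -\alpha_2(\lambda(\ga))$; inserting the local bi-Lipschitz estimate between these two gives $\alpha_2(\lambda(\ga)) \le \alpha_1(\lambda(\ga))$.

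\textbf{Symmetry and conclusion.} Running the identical argument with the roles of $(\t_1, F)$ and $(\t_2, F^{-1})$ swapped—using the local bi-Lipschitz property of $F^{-1}$ near $y_2$ and the analogous Zariski open subset inside $\F_{\t_2}$—will yield $\alpha_1(\lambda(\ga)) \le \alpha_2(\lambda(\ga))$, hence $\alpha_1(\lambda(\ga)) = \alpha_2(\lambda(\ga))$, contradicting the disjointness on $\ga$. The main obstacle is essentially bookkeeping: verifying that the chain \eqref{las} of Proposition \ref{no} really uses only the values of $F$ along the single orbit $\ga^n \xi'$ and at $\xi$; once that is confirmed, the local bi-Lipschitz estimate slots directly into the chain in place of the global H\"older one.
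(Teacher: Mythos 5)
Your proposal is correct and follows essentially the same route as the paper: localize the bi-Lipschitz estimate at the attracting fixed point via the first-order expansion of $F$ (the paper uses the Taylor expansion directly rather than the inverse function theorem, but the resulting estimate \eqref{ccc} is the same), then feed it into the chain \eqref{las} from Proposition \ref{no}, which indeed only evaluates $F$ along the single orbit $\ga^n\xi'\to y_1$. The only cosmetic difference is that the paper normalizes $\alpha_1(\lambda(\ga))<\alpha_2(\lambda(\ga))$ up front and derives one inequality, whereas you derive both inequalities and contradict the disjointness via equality; these are logically the same argument.
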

\begin{proof} Let $\ga\in \Ga$ be as above.
    For each $i=1,2$, let $y_i\in \La_{\t_i}$ be the attracting fixed point of $\ga$. Then $F(y_1)=y_2$ by \eqref{yy}.
     Suppose that $F$ is $C^1$ at $y_1$, and the Jacobian of $F$ at $y_1$
    is not zero. Then $F^{-1}$ is also $C^1$ at $y_2$.
Using the exponential maps and the Taylor series expansion of $F$, we get that there exist $ c\ge 1$ and an open neighborhood $U$ of $y_1$ in $\La_{\t_1}$ such that for all $y\in U$,
    \be\label{ccc} c^{-1} d_{1}( y, y_1) \le d_{2}(F(y), F(y_1))\le c d_{1}( y, y_1) .\ee 
 Let $\alpha_i\in \t_i$ be as in \eqref{aii}. Without loss of generality, we may assume 
   $\alpha_1(\la(\ga))< \alpha_2(\la(\ga))$ by switching the indexes if necessary.  On the other hand, using \eqref{ccc}, the computation \eqref{las}
 gives $\alpha_2(\la(\gamma))\le \alpha_1(\lambda(\ga))$, which yields a contradiction.
\end{proof}

\begin{Rmk}
    It would be interesting to know whether  $\cal A$ can be replaced by the set of all {\it conical} limit points of $\Ga$ in Proposition \ref{noo}. A point $\xi=gP_{\t_1}$ is $\Ga$-conical if
    $\limsup \Gamma g (K\cap P_{\t_1})A^+\ne \emptyset$, that is,
    there exists a sequence $\ga_i\in \Ga$, $a_i\in A^+$ and $m_i\in K\cap P_{\theta_i}$ such that $\ga_i g m_i a_i$ converges (see \cite[Lemma 5.4]{KOW}
    for an equivalent definition in terms of shadows).  
    
    This question is inspired by a related result for $G=\SO(n+1,1)^\circ$. Tukia \cite{tukia}  showed that
    if $f:\S^n \to\S^n $ is a homeomorphism
    which conjugates a discrete subgroup $\Gamma_1$ of $G$
    to another discrete group $\Gamma_2$ and has a non-vanishing Jacobian at a conical limit point of $\Gamma_1$, then $\Gamma_1$ is conjugate to $\Gamma_2$ (see also \cite{ivanov} for an extension of this result to other rank one groups).
    For a related result for $(1,1,2)$-hyperconvex groups, see \cite[Corollary 7.5]{PS}.
    \end{Rmk}

In the rest of this section, let $G_i$ be a connected simple real algebraic group and
$\theta_i$ be a non-empty set of simple roots of $G_i$ for $i=1,2$.
Let $\Gamma_i<G_i$ be a Zariski dense discrete subgroup and $\La_{\t_i}$
denote the limit set of $\Ga_i$ in $\F_i=G_i/P_{\t_i}$.

\begin{lemma}  \cite[Lemma 4.5]{KO3}\label{un}
  For any isomorphism $\rho:\Ga_1\to \Ga_2$,
   there exists at most one $\rho$-equivariant continuous map $f:\La_{\t_1}\to \La_{\t_2}$.
\end{lemma}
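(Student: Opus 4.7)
The plan is to use the $\Ga_1$-minimality of $\La_{\t_1}$. Given two $\rho$-equivariant continuous maps $f_1, f_2: \La_{\t_1} \to \La_{\t_2}$, set
$$E := \{\xi \in \La_{\t_1}: f_1(\xi) = f_2(\xi)\}.$$
Then $E$ is closed in $\La_{\t_1}$ (since $\F_{\t_2}$ is Hausdorff) and $\Ga_1$-invariant by equivariance of $f_1$ and $f_2$. Since $\Ga_1$ is Zariski dense, $\La_{\t_1}$ is the unique $\Ga_1$-minimal subset of $\F_{\t_1}$ as recalled after Definition \ref{l2}, so $E$ is either empty or all of $\La_{\t_1}$. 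It therefore suffices to exhibit a single common fixed point, and the strategy is to locate one at the attracting fixed point of a suitably chosen loxodromic element.

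First I would pick a loxodromic $\ga \in \Ga_1$ whose image $\rho(\ga) \in \Ga_2$ is also loxodromic. To produce such $\ga$, apply Theorem \ref{ben} to the Zariski closure $H$ of the self-joining $\{(\ga, \rho(\ga)):\ga \in \Ga_1\} \subset G_1 \times G_2$: the limit cone of the self-joining has non-empty interior in the Cartan subspace of $H$, and directions in this interior project to loxodromic directions in each factor, producing $\ga$ with the desired property. Let $y_\ga \in \La_{\t_1}$ and $y_\ga' \in \La_{\t_2}$ denote the respective attracting fixed points.

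By Lemma \ref{att}, the complements of the attracting basins,
$$Z_1 := \{\xi \in \F_{\t_1}: \ga^n \xi \not\to y_\ga\}, \qquad Z_2 := \{\eta \in \F_{\t_2}: \rho(\ga)^n \eta \not\to y_\ga'\},$$
are proper Zariski closed subsets. Since each $f_i(\La_{\t_1})$ is compact, $\Ga_2$-invariant, and contained in $\La_{\t_2}$, the $\Ga_2$-minimality of $\La_{\t_2}$ forces $f_i(\La_{\t_1}) = \La_{\t_2}$; and since $\La_{\t_2}$ is Zariski dense in $\F_{\t_2}$, each set $f_i^{-1}(Z_2) \cap \La_{\t_1}$ is a proper closed subset of $\La_{\t_1}$. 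Hence
$$U := \La_{\t_1} \setminus \bigl(Z_1 \cup f_1^{-1}(Z_2) \cup f_2^{-1}(Z_2)\bigr)$$
is a non-empty open subset of $\La_{\t_1}$. For any $\xi \in U$, continuity of $f_i$ and equivariance give
$$f_i(y_\ga) = \lim_{n\to \infty} f_i(\ga^n \xi) = \lim_{n\to \infty} \rho(\ga)^n f_i(\xi) = y_\ga', \qquad i = 1, 2,$$
so $y_\ga \in E$, forcing $E = \La_{\t_1}$.

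The main technical hurdle is confirming the existence of a loxodromic $\ga \in \Ga_1$ with $\rho(\ga)$ also loxodromic. One must verify that the projections of the Cartan subspace of $H$ onto those of $G_1$ and $G_2$ send $\inte \fa_H^+$ into $\inte \fa_i^+$, so that a loxodromic element of $H$ is loxodromic in each factor; this follows from the surjectivity of the projections $H \twoheadrightarrow G_i$ together with Benoist's density theorem. Once this step is in hand, the remainder of the argument is a routine combination of minimality with the attracting-repelling dynamics recorded in Lemma \ref{att}.
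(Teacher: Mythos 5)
Your overall strategy --- identify $f_1$ and $f_2$ at the attracting fixed point of a loxodromic $\ga\in\Ga_1$ whose image $\rho(\ga)$ is also loxodromic, then propagate the agreement by minimality --- is the same mechanism the paper relies on (it defers to \cite[Lemma 4.5]{KO3} and sketches exactly this: $f$ must send attracting fixed points to attracting fixed points, and these are dense in $\La_{\t_1}$). Your reduction via the closed $\Ga_1$-invariant set $E$ is a clean variant of the density argument, and your production of $\ga$ via Theorem \ref{ben} applied to the Zariski closure of the self-joining is sound, granted the routine verifications you flag: since $G_1$ and $G_2$ are simple, the identity component of that closure is semisimple with surjective projections to each factor, and a surjection of connected semisimple groups carries the interior of the positive Weyl chamber into the interior of the positive Weyl chamber, so loxodromic elements project to loxodromic elements.

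There is, however, one genuine gap: the assertion that $U=\La_{\t_1}\setminus\bigl(Z_1\cup f_1^{-1}(Z_2)\cup f_2^{-1}(Z_2)\bigr)$ is non-empty does not follow from each of the three sets being a proper closed subset of $\La_{\t_1}$; a compact set can perfectly well be covered by finitely many proper closed subsets. The set $Z_1\cap\La_{\t_1}$ is the trace of a proper Zariski closed subset of the irreducible variety $\F_{\t_1}$, but $f_1^{-1}(Z_2)$ and $f_2^{-1}(Z_2)$ are only closed, since the $f_i$ are merely continuous, so no irreducibility argument applies to the union as written. The repair is short and is essentially the argument the paper runs inside the proof of Proposition \ref{no}: it suffices to find, for each $i$ separately, some $\xi_i\in\O:=\La_{\t_1}\setminus Z_1$ with $f_i(\xi_i)\notin Z_2$ (a common witness for $i=1,2$ is not needed). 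If instead $f_i(\O)\subset Z_2$, then by minimality and compactness $\La_{\t_1}=\bigcup_{k=1}^{n}\ga_k\O$ for finitely many $\ga_k\in\Ga_1$, whence $\La_{\t_2}=f_i(\La_{\t_1})\subset\bigcup_{k=1}^{n}\rho(\ga_k)Z_2$, a proper Zariski closed subset of the irreducible variety $\F_{\t_2}$, contradicting the Zariski density of $\La_{\t_2}$. With such $\xi_i$ in hand, your final computation gives $f_i(y_\ga)=y_\ga'$ for $i=1,2$, and the rest of your argument goes through.
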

Indeed, $f$ must send the attracting fixed point of any loxodromic element $\ga$ to that of $\rho(\ga)$ whenever $\rho(\ga)$ is loxodromic. Since the set of attracting fixed points of loxodromic elements is dense in $\La_{\t_1}$ by the Zariski density hypothesis on $\Ga_1$ \cite{Ben} and $f$ is continuous, this determines the map $f$.

Theorem \ref{m0} is a special case of the following theorem for $\kappa=1$:
\begin{theorem} \label{rigid2}
 Suppose that there exists a $\rho$-equivariant   $\kappa$-bi-H\"older map
 $f:\La_{\t_1}\to \F_2$ for some $\kappa>0$.
Then $\rho$ extends to a Lie group isomorphism  $\bar\rho :G_1\to G_2$.
 Moreover, there exists a non-empty subset 
 $\Theta_2\subset \theta_2$ such that $\bar\rho$ maps
 $P_{\theta_1}$ into a conjugate of $P_{\Theta_2}$ and
 the smooth submersion $G_1/P_{\t_1}\to G_2/P_{\Theta_2}$ induced by $\bar\rho$ coincides
 with the composition $\pi\circ f$ on $\La_{\theta_1}$ where $\pi:G_2/P_{\theta_2}\to G_2/P_{\Theta_2}$ is the canonical factor map. 
    \end{theorem}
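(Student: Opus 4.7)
The plan is to reduce the theorem to Proposition \ref{no} via a self-joining construction, applied in two stages.

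\textbf{Stage 1: existence of the extension.} I will form the self-joining $\Gamma := \{(\gamma, \rho(\gamma)) : \gamma \in \Gamma_1\} \subset G := G_1 \times G_2$ and let $H$ denote its Zariski closure. Both projections $H \to G_i$ are surjective since each $\Gamma_i$ is Zariski dense. A Goursat-type dichotomy, using simplicity of $G_1$ and $G_2$ (and passing to adjoint forms if centers intervene), gives either $H = G_1 \times G_2$, or $H$ is the graph of a Lie group isomorphism $\bar\rho: G_1 \to G_2$ extending $\rho$. To exclude the former, I will apply Proposition \ref{no} to $\Gamma$ in $G$: in the simple root system $\Pi = \Pi_1 \sqcup \Pi_2$ of $G$, the subsets $\theta_1 \subset \Pi_1$ and $\theta_2 \subset \Pi_2$ are disjoint, and $G/P_{\theta_1} \cong \F_1$, $G/P_{\theta_2} \cong \F_2$. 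A minimality argument shows $f(\Lambda_{\theta_1}) = \Lambda_{\theta_2}$: the image is $\Gamma_2$-invariant and closed, hence contains $\Lambda_{\theta_2}$, while $f^{-1}(\Lambda_{\theta_2})$ is $\Gamma_1$-invariant, closed, and non-empty, hence equals $\Lambda_{\theta_1}$. Thus $f$ is a $\Gamma$-equivariant $\kappa$-bi-Hölder homeomorphism between the limit sets of $\Gamma$ in these two flag varieties of $G$, contradicting Proposition \ref{no}.

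\textbf{Stage 2: identification of $\Theta_2$ and the boundary map.} Let $\Theta \subset \Pi_2$ be the unique subset such that $\bar\rho(P_{\theta_1})$ is $G_2$-conjugate to $P_\Theta$. Then $\bar\rho$ induces a $\rho$-equivariant diffeomorphism $\bar f: \F_{\theta_1} \to \F_\Theta$, whose restriction to $\Lambda_{\theta_1}$ is a bi-Lipschitz homeomorphism onto $\Lambda_\Theta$ (by the same minimality argument applied to $\bar f$, combined with bi-Lipschitzness of a smooth diffeomorphism on compact subsets). Consequently, $\bar f \circ f^{-1}: \Lambda_{\theta_2} \to \Lambda_\Theta$ is a $\Gamma_2$-equivariant $\kappa^{-1}$-bi-Hölder homeomorphism. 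I will argue $\Theta \cap \theta_2 \neq \emptyset$: otherwise, Proposition \ref{no} applied to $\Gamma_2 \subset G_2$ with the disjoint pair $(\theta_2, \Theta)$ would yield a contradiction. Setting $\Theta_2 := \Theta \cap \theta_2$, I obtain a non-empty subset of $\theta_2$ with $P_\Theta \subset P_{\Theta_2}$ up to conjugation, so $\bar\rho(P_{\theta_1})$ sits inside a conjugate of $P_{\Theta_2}$, and the induced smooth submersion $\F_{\theta_1} \to \F_{\Theta_2}$ factors as $\bar f$ followed by the canonical projection $\F_\Theta \to \F_{\Theta_2}$. Finally, since $\Theta_2 \subset \theta_2$ yields the canonical projection $\pi: \F_{\theta_2} \to \F_{\Theta_2}$, both $\pi \circ f$ and the induced submersion's restriction to $\Lambda_{\theta_1}$ are $\rho$-equivariant continuous maps $\Lambda_{\theta_1} \to \F_{\Theta_2}$; by the uniqueness Lemma \ref{un}, they coincide.

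The main obstacle is the two-fold application of Proposition \ref{no}: first to the self-joining in $G_1 \times G_2$ to force the Lie group extension $\bar\rho$, and then inside $G_2$ to pin down $\Theta_2$. The second application is the more subtle one: it crucially uses that the smooth diffeomorphism $\bar f$ is bi-Lipschitz on the compact limit set, so that its composition with the bi-Hölder $f^{-1}$ remains a bi-Hölder map eligible for Proposition \ref{no}, now applied with the disjoint pair of simple root subsets $(\theta_2, \Theta)$.
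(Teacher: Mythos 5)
Your proposal is correct and follows essentially the same route as the paper: form the self-joining in $G_1\times G_2$, apply Proposition \ref{no} to the disjoint pair of simple-root subsets to rule out Zariski density and obtain $\bar\rho$, then apply Proposition \ref{no} a second time inside $G_2$ to the bi-H\"older composition of $f$ with the diffeomorphism induced by $\bar\rho$ to force $\Theta_2=\Theta\cap\theta_2\neq\emptyset$, and finish with the uniqueness Lemma \ref{un}. The only cosmetic differences are that you spell out the Goursat dichotomy where the paper cites \cite{DK1}, and you compose in the direction $\bar f\circ f^{-1}$ (a $\kappa^{-1}$-bi-H\"older map) rather than the paper's $f\circ\tilde\Phi^{-1}$, which is immaterial since Proposition \ref{no} holds for every positive exponent.
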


\begin{proof}
Let $G=G_1\times G_2$.
Define the following self-joining subgroup
$$ \Ga=(\op{id}\times \rho)(\Ga_1)=\{(\ga, \rho(\ga)):\ga\in \Ga_1 \} <G.$$
Note that $P_1:=P_{\t_1}\times G_2$ and $P_2:=G_1\times P_{\t_2}$
are parabolic subgroups of $G$. The maps
$g_1P_{\t_1}\mapsto (g_1, e) P_1$ and 
$g_2P_{\t_2}\mapsto (e, g_2) P_2$ define diffeomorphisms between
$G_1/P_{\t_1}$ and $G_2/P_{\t_2}$ 
 with $G/P_1$ and $G/P_2$ respectively. Moreover,
 under this identification, the limit set
 $\La_{\t_i}$ of $\Ga_i$ in $G_i/P_{\t_i}$ corresponds to
 the limit set $\La_i$ of the self-joining $\Ga$ in $G/P_i$ for each $i=1,2$.

Since $f$ is a $\rho$-equivariant  continuous embedding  of $\La_{\t_1}$ into $ G/P_{\t_2}$,
its image is a $\Ga_2$-invariant compact subset. Since $\La_{\t_1}$ is a $\Ga_1$-minimal subset,
the image $f(\La_{\t_1})$  is also a $\Ga_2$-minimal subset. Therefore $f(\La_{\t_1})=\La_{\t_2}$ and hence
we have a $\Ga$-equivariant bijection $f:\La_1\to \La_2$ which is $\kappa$-bi-H\"older.
    
 Since $P_1$ and $P_2$ are parabolic subgroups corresponding to disjoint subsets of simple roots of $G$, Proposition \ref{no} implies that $\Ga$ cannot be Zariski dense in $G$. Since both $G_1$ and $G_2$ are simple,  the non-Zariski density of the self-joining group $\Ga$ implies that
$\rho$ extends to a Lie group isomorphism $\bar\rho: G_1\to G_2$ (cf. \cite{DK1}).

  Since $\bar\rho(P_{\t_1})$ must be a parabolic subgroup of $G_2$, there exists $g\in G_2$ such that $\bar\rho(P_{\t_1})= g P_{\t_0}g^{-1}$ where $\t_0$ is a  non-empty subset of some simple roots of $G_2$.  We claim $\t_0\cap \t_2 \ne\emptyset$. 
 By replacing $\rho$ by $\op{inn}(g)\circ \rho$
 where $\op{inn}(g):G_2\to G_2$ is the conjugation by $g$, we may assume without loss of generality that $g=e$. The isomorphism $\bar \rho$
 induces a diffeomorphism $\tilde \Phi: G_1/P_{\t_1} \to G_2/P_{\t_0}$
given by $\tilde \Phi (g_1P_{\t_1})= \bar\rho (g_1) P_{\t_0}$. Denote by $\La_{\t_0}$  the limit set of $\Ga_2$
in $G_2/P_{\t_0}$. Since $\bar\rho|_{\Ga_1}=\rho$ and hence $\tilde \Phi$ is $\rho$-equivariant, we have
$\tilde \Phi(\La_{\t_1})=\La_{\t_0}$. Then the composition $F:= f\circ \tilde \Phi^{-1}$ restricted to $\La_{\t_0}$ yields a $\kappa$-bi-H\"older map between $\La_{\t_0}$ and $\La_{\t_2}$. Since $\tilde \Phi^{-1}$ is $\rho^{-1}$-equivariant and $f$ is $\rho$-equivariant,
$F$ is $\Ga_2$-equivariant.
  So by applying Proposition~\ref{no} one more time, we obtain $\t_0\cap \t_2 \ne \emptyset$. Setting $\Theta_2=\t_0\cap \t_2$, since $P_{\theta_0}$ and $ P_{\theta_2}$
  are subgroups of $P_{\Theta_2}$, we get a map $\Phi:=G_1/P_{\t_1}\to G_2/P_{\Theta_2}$ by composing $\tilde \Phi$ with the canonical factor map $G_1/P_{\theta_0}\to G_2/P_{\Theta_2}$.
The last claim $\Phi=\pi\circ f$ on $\La_{\t_1}$ follows from Lemma \ref{un}.   This finishes the proof.
\end{proof}

\begin{Rmk}\label{semi}
      The hypothesis that $G_1$ and $G_2$ are simple is necessary in Theorem \ref{rigid2}.
    For example, consider a discrete Zariski dense subgroup $\Ga$ of a simple algebraic group $G$ with a discrete faithful representation $\rho:\Ga\to G$ which does not extend to $G$. Then $\Ga_\rho=(\text{id}\times \rho)(\Ga)$ is Zariski dense in $G$ and
    the map $\ga\to (\ga, \rho(\ga))$ gives an isomorphism
    $\Ga\to \Ga_\rho$. On the other hand,  for any parabolic subgroup $P$ of $G$, the isomorphism $G/P\simeq (G\times G)/(P\times G)$
    provides an equivariant bi-Lipschitz bijection the limit set of
    $\Ga$ in $G/P$ and the limit set of $\Ga_\rho$ in $(G\times G)/(P\times G)$.
\end{Rmk}

We note that the global bi-H\"older condition in Proposition \ref{no} and Theorem
\ref{rigid2} can be relaxed to a local bi-H\"older condition by the following lemma.
\begin{lem} \label{global} Keep the notation as in Theorem \ref{rigid2} but assume $G_1$ and $G_2$ are semisimple, not just simple. Let $f:\La_{\theta_1}\to \La_{\theta_2}$ be a $\rho$-equivariant homeomorphism which is $\kappa$-bi-H\"older on some non-empty open subset  $U$ of $\La_{\theta_1}$ for some $\kappa>0$. Then
$f$ is $\kappa$-bi-H\"older globally.
\end{lem}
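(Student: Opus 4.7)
The plan is to use minimality of the $\Gamma_1$-action on $\La_{\theta_1}$ to spread the local bi-H\"older estimate on $U$ across all of $\La_{\theta_1}$ via $\rho$-equivariance, and then to glue the resulting local estimates into a global one using compactness. The main point requiring care, and which I expect to be the only real obstacle, is controlling the bi-H\"older constants uniformly across the cover and handling pairs of points which lie in no common translate; both issues dissolve once one invokes finiteness of the cover together with compactness of the limit sets. To begin, since $\Gamma_1$ is Zariski dense, $\La_{\theta_1}$ is the unique $\Gamma_1$-minimal closed subset of $\F_{\theta_1}$, so the $\Gamma_1$-translates of $U$ form an open cover of $\La_{\theta_1}$; by compactness, finitely many of them, say $\gamma_1 U,\ldots,\gamma_n U$ with $\gamma_i \in \Gamma_1$, already cover $\La_{\theta_1}$.

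Next, I would transfer the bi-H\"older estimate from $U$ to each $\gamma_i U$. Each $\gamma_i$ acts as a diffeomorphism of $\F_{\theta_1}$, hence is bi-Lipschitz on the compact subset $\La_{\theta_1}$; similarly $\rho(\gamma_i)$ is bi-Lipschitz on $\La_{\theta_2}$. Let $L_i \ge 1$ be a common bi-Lipschitz constant for $\gamma_i^{\pm 1}$ acting on $\La_{\theta_1}$ and for $\rho(\gamma_i)^{\pm 1}$ acting on $\La_{\theta_2}$. Writing an arbitrary pair in $\gamma_i U$ as $\gamma_i x, \gamma_i y$ with $x, y \in U$, and using $f(\gamma_i x) = \rho(\gamma_i)f(x)$ together with the hypothesized estimate \eqref{ck} on $U$, a direct calculation shows $f|_{\gamma_i U}$ is $\kappa$-bi-H\"older with constant $C L_i^{1+\kappa}$. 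Setting $C_0 := \max_i C L_i^{1+\kappa}$, this uniform constant works on every $\gamma_i U$.

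Finally, let $\delta > 0$ be a Lebesgue number for the finite open cover $\{\gamma_i U\}$ of $\La_{\theta_1}$. For $x, y \in \La_{\theta_1}$ with $d_{\F_{\theta_1}}(x, y) < \delta$, both points lie in a common $\gamma_i U$ and the uniform local estimate applies. For the remaining pairs, I would use compactness: the finiteness of $\mathrm{diam}(\La_{\theta_2})$ provides an upper H\"older bound with constant $\mathrm{diam}(\La_{\theta_2})\,\delta^{-\kappa}$, and the injectivity of $f$ combined with compactness of $\{(x,y)\in\La_{\theta_1}^2 : d_{\F_{\theta_1}}(x,y)\ge\delta\}$ yields a positive infimum of $d_{\F_{\theta_2}}(f(x),f(y))$ on this set, giving the matching lower H\"older bound. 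Taking the worst constant across all three regimes produces the desired global $\kappa$-bi-H\"older estimate on all of $\La_{\theta_1}$.
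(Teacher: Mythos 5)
Your proposal is correct and follows essentially the same route as the paper: minimality of the $\Gamma_1$-action plus compactness yields a finite cover of $\La_{\theta_1}$ by translates $\gamma_i U$, the diffeomorphic (hence bi-Lipschitz) action of each $\gamma_i$ and $\rho(\gamma_i)$ transfers the estimate to each translate, and compactness disposes of pairs that are far apart. The only difference is organizational --- the paper argues by contradiction with a sequence of bad pairs (forcing $d_{\F_1}(\xi_i,\eta_i)\to 0$ and hence eventual containment in a single $\gamma_k U$), whereas you argue directly via a Lebesgue number and an explicit positive infimum on the far-pair set --- which does not change the substance of the argument.
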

\begin{proof}  Let $\La_i=\La_{\theta_i}$ for $i=1,2$. Since $\La_1$ is $\Ga_1$-minimal, $\La_1=\Ga_1 U$ and hence, by compactness,
    we have $\La_1$ is a finite union of $\ga_k U$ for some $\ga_1, \cdots, \ga_n\in \Ga_1$. 
  If $f$ is not $\kappa$-H\"older globally, by the compactness of $\La_1$,
we have a sequence $\xi_i\to \xi$ and $\eta_i\to \eta $ such that
\be\label{inf} \frac{d_{\F_2}(f(\xi_i), f(\eta_i))}{d_{\F_1}(\xi_i, \eta_i)^\kappa}\to \infty.\ee 
Since $\F_2$ is compact, we have ${d_{\F_1}(\xi_i, \eta_i)}\to 0$.
Therefore, for some $1\le k\le n$, $\xi_i,\eta_i \in \ga_k U$ for all $i$.  Noting that the action of each element of
    $g_i \in G_i$ on $\F_i$ is a diffeomorphism for $i=1,2$,
     we can let $L$ be the maximum of the bi-Lipschitz constants of $\ga_k$ on $\F_1$ and of $\rho(\ga_k)$ on $\F_2$.
Now we have
$d_{\F_2}(f(\xi_i), f(\eta_i))\le L d_{\F_2}(f(\ga_k^{-1}\xi_i, \ga_k^{-1}\eta_i)) $
and $d_{\F_1}(\xi_i, \eta_i)\ge L^{-1} d_{\F_1}(\ga_k^{-1}\xi_i, \ga_k^{-1}\eta_i)) $. 
Since $f$ is $\kappa$-H\"older on $U$, it follows that  the ratio in \eqref{inf} is bounded, yielding a contradiction.
This shows that $f$ is $\kappa$-H\"older globally.
Similarly by considering $f^{-1}$, we can show that $f^{-1}$ is $\kappa^{-1}$-H\"older globally.
\end{proof}

Theorem \ref{m00} is now a special case
of the following corollary of Theorem \ref{rigid2} together with Lemma \ref{global}:
\begin{cor}
Let $\alpha_i$ be a simple root of $G_i$ for $i=1,2$.
 Suppose that there exists a $\rho$-equivariant bijection
 $f:\La_{\alpha_1}\to \La_{\alpha_2}$ which is
  $\kappa$-bi-H\"older on some non-empty open subset of $\La_{\alpha_1}$ for some $\kappa>0$.
Then $\kappa=1$ and $\rho$ extends to a Lie group isomorphism  $\bar\rho :G_1\to G_2$ which induces a diffeomorphism $\bar f:G_1/P_{\alpha_1}\to G_2/P_{\alpha_2}$ such that $\bar f|_{\La_1}=f$.
\end{cor}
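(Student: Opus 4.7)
My plan is to deduce this corollary from Theorem~\ref{rigid2} together with Lemma~\ref{global}, after upgrading the bijection $f$ to a homeomorphism and identifying the relevant parabolic subgroups.

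First I would promote $f$ to a homeomorphism. The $\kappa$-bi-H\"older hypothesis on the open set $U$ makes $f|_U$ continuous, and by the $\Ga_1$-minimality of $\La_{\alpha_1}$, the union $\bigcup_{\ga\in\Ga_1}\ga\cdot U$ is a nonempty $\Ga_1$-invariant open subset of $\La_{\alpha_1}$, hence all of $\La_{\alpha_1}$. Since $f$ is $\rho$-equivariant and continuous on each open piece $\ga\cdot U$, it is continuous on all of $\La_{\alpha_1}$; the same argument applied to $f^{-1}$ (which is $\rho^{-1}$-equivariant and bi-H\"older on $f(U)$) shows $f$ is a homeomorphism. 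Now Lemma~\ref{global} immediately upgrades the local bi-H\"older property to a global one on $\La_{\alpha_1}$.

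Next, I would apply Theorem~\ref{rigid2} with $\theta_1=\{\alpha_1\}$ and $\theta_2=\{\alpha_2\}$. The theorem yields the Lie group isomorphism $\bar\rho:G_1\to G_2$ together with a nonempty subset $\Theta_2\subset\{\alpha_2\}$, which forces $\Theta_2=\{\alpha_2\}$. Thus $\bar\rho(P_{\alpha_1})$ is contained in a conjugate $gP_{\alpha_2}g^{-1}$; since $P_{\alpha_1}$ is a \emph{maximal} parabolic subgroup and $\bar\rho$ is a Lie group isomorphism, its image $\bar\rho(P_{\alpha_1})$ is also a maximal parabolic of $G_2$, and a maximal parabolic contained in another maximal parabolic must equal it. So $\bar\rho(P_{\alpha_1})=gP_{\alpha_2}g^{-1}$, and the smooth submersion $G_1/P_{\alpha_1}\to G_2/P_{\Theta_2}$ provided by Theorem~\ref{rigid2} is actually the diffeomorphism $\bar f:G_1/P_{\alpha_1}\to G_2/P_{\alpha_2}$ (using that the factor map $\pi:G_2/P_{\alpha_2}\to G_2/P_{\Theta_2}$ is the identity). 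The equality $\bar f|_{\La_{\alpha_1}}=\pi\circ f=f$ is then exactly what Theorem~\ref{rigid2} asserts.

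Finally I would show $\kappa=1$, which is the easiest step rather than a real obstacle. The diffeomorphism $\bar f$ of compact manifolds is bi-Lipschitz, so $f=\bar f|_{\La_{\alpha_1}}$ is bi-Lipschitz; combined with the $\kappa$-bi-H\"older bounds, for any sequence of distinct pairs $\xi_n\ne\eta_n$ in $\La_{\alpha_1}$ with $d_{\F_{\alpha_1}}(\xi_n,\eta_n)\to 0$ one would compare
$$L^{-1}d_{\F_{\alpha_1}}(\xi_n,\eta_n)\;\le\;d_{\F_{\alpha_2}}(f(\xi_n),f(\eta_n))\;\le\;C\,d_{\F_{\alpha_1}}(\xi_n,\eta_n)^{\kappa},$$
forcing $\kappa\ge 1$, and the symmetric inequality forces $\kappa\le 1$. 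The existence of such pairs follows because $\La_{\alpha_1}$ is a perfect set: if it contained an isolated point then, by $\Ga_1$-equivariance, every point would be isolated and $\La_{\alpha_1}$ would be a finite $\Ga_1$-orbit, contradicting the Zariski density of $\Ga_1$ in the simple group $G_1$. The only step requiring genuine care is the identification $\bar\rho(P_{\alpha_1})=gP_{\alpha_2}g^{-1}$, which rests on the maximality of both parabolics; everything else is a direct application of the machinery already established.
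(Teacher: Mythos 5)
Your argument is correct and follows the paper's route exactly: the authors derive this corollary from Theorem \ref{rigid2} together with Lemma \ref{global} and remark that $\kappa=1$ because $\bar f$ is a diffeomorphism and hence bi-Lipschitz, and you have simply filled in the same details (upgrading the bijection to a homeomorphism, forcing $\Theta_2=\{\alpha_2\}$ so that $\pi$ is the identity, using maximality of $P_{\alpha_1}$ to get a diffeomorphism, and using perfectness of the limit set to pin down $\kappa$). Two cosmetic slips worth fixing: the continuity of $f^{-1}$ is cleanest via the fact that a continuous bijection from a compact space onto a Hausdorff space is a homeomorphism (your translates $\rho(\ga)f(U)$ are not yet known to be open at that stage), and the inequality $L^{-1}d\le C\,d^{\kappa}$ as $d\to 0$ forces $\kappa\le 1$ rather than $\kappa\ge 1$ (the two bounds are swapped, but together they still give $\kappa=1$).
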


Note that the conclusion $\kappa=1$ follows since $\bar f$ is diffeomorphism and hence bi-Lipschitz.

\begin{Rmk} \label{free} In general, we cannot replace $f$ bi-Lipschitz by Lipschitz in Theorem \ref{m00}. For example, let $\G$ be a Schottky subgroup of $\SL_2(\br)$ generated by two loxodromic elements $a, b$. Then for any $N\ge 2$,
the representation $\rho$ of $\Ga$ into $\SL_2(\br)$ given by $a\mapsto a^N$ and
$b\mapsto b^N$ induces an equivariant homeomorphism $\La\to \La$ which is
Lipschitz, but not bi-Lipschitz. Clearly, $\rho$ does not extend to $\SL_2(\br)$.
\end{Rmk} 

On the other hand, 
we have the following corollary of the proof of Theorem \ref{rigid2} where $f$ is required only to be Lipschitz under an extra hypothesis on the Hausdorff dimension of limit sets. In the statement below, a M\"obius transformation is the extension of
{\em any} isometry of $\bH^{n+1}$ to its boundary $\S^n=\partial\bH^{n+1}$.

\begin{cor} \label{lim}
     For $i=1,2$, let
      $\Ga_i$ be a convex cocompact Zariski dense subgroup of $G_i=\op{SO}^\circ(n_i+1, 1)$, $n_i\ge 1$. Let $\La_i\subset \S^{n_i} $ be the limit set of $\Ga_i$. Suppose that the Hausdorff dimension of $\La_1$
     is equal to the Hausdorff dimension of $\Lambda_2$.
     Let $f:\La_1\to \La_2$ be a $\rho$-equivariant homeomorphism which is Lipschitz on some non-empty open subset of $\La_1$.  Then $\rho$ extends to a Lie group isomorphism of $G_1\to G_2$ and $f$ extends to a M\"obius transformation of $\S^{n}$ for $n=n_1=n_2$.
\end{cor}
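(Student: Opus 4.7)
The plan is to reduce to Theorem~\ref{rigid2} by upgrading the local Lipschitz hypothesis on $f$ to a local bi-Lipschitz hypothesis. Once this is done, Lemma~\ref{global} (applied with $\kappa=1$) makes $f$ globally bi-Lipschitz, and Theorem~\ref{rigid2} produces a Lie group isomorphism $\bar\rho \colon G_1\to G_2$. In the rank-one setting each $G_i$ has a single simple root $\alpha_i$ with $G_i/P_{\alpha_i}=\mathbb S^{n_i}$, so the subset $\Theta_2\subset\theta_2=\{\alpha_2\}$ furnished by Theorem~\ref{rigid2} must equal $\{\alpha_2\}$. Thus $\bar\rho$ induces a diffeomorphism $\bar f\colon \mathbb S^{n_1}\to \mathbb S^{n_2}$ restricting to $f$ on $\Lambda_1$, which forces $n_1=n_2=n$. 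Every Lie group automorphism of $\SO^\circ(n+1,1)$ is realized by conjugation by an element of $\mathrm{Isom}(\mathbb H^{n+1})$, so $\bar f$ is a M\"obius transformation of $\mathbb S^n$.

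The heart of the argument is the bi-Lipschitz upgrade, which uses the Hausdorff dimension equality via Sullivan's theorem: for a convex cocompact Zariski dense subgroup $\Gamma_i<\SO^\circ(n_i+1,1)$, the Hausdorff measure $\mathcal H^{\delta_i}|_{\Lambda_i}$ is finite, positive, and Ahlfors $\delta_i$-regular, where $\delta_i=\dim_H \Lambda_i$. Write $\delta := \delta_1 = \delta_2$. Since $f|_U$ is $L$-Lipschitz, $\mathcal H^\delta(f(E)) \le L^\delta \mathcal H^\delta(E)$ for every Borel $E\subset U$. For $\epsilon>0$ set
\[
A_\epsilon := \Bigl\{\xi \in U : \liminf_{r\to 0^+}\; r^{-1}\operatorname{diam}\bigl(f(B(\xi,r)\cap\Lambda_1)\bigr)\le \epsilon\Bigr\}.
\]
A Vitali-type covering of $A_\epsilon$ by balls on which the diameter-to-radius ratio of $f$ is at most $\epsilon$, combined with the Ahlfors regularity of $\mathcal H^\delta|_{\Lambda_2}$, yields $\mathcal H^\delta(f(A_\epsilon))\le C\epsilon^\delta \mathcal H^\delta(A_\epsilon)$. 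Because $f(U)$ is a non-empty open subset of the Ahlfors regular set $\Lambda_2$, $\mathcal H^\delta(f(U))>0$; letting $\epsilon\to 0$ then forces $\mathcal H^\delta(U\setminus A_{\epsilon_0})>0$ for some $\epsilon_0>0$. Taking a density point $\xi_0$ of $U\setminus A_{\epsilon_0}$ and combining the resulting lower diameter growth bound with the Lipschitz upper bound, the uniform quasisymmetry of the $\Gamma_1$-action on $\Lambda_1$, and the uniform perfectness of $\Lambda_1$ (guaranteed by convex cocompactness and Zariski density), one extracts a two-sided bi-Lipschitz estimate on an open neighborhood of $\xi_0$.

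\textbf{Main obstacle.} The delicate step is the last one: the diameter lower bound only controls the \emph{size} of the image of each ball and does not by itself prevent $f$ from identifying specific nearby pairs of points. The passage to an honest two-sided estimate is made possible by the fact that every small ball in $\Lambda_1$ is quasisymmetrically equivalent, via a single element of $\Gamma_1$, to a fixed ball of macroscopic size, which allows the diameter bound to be upgraded to a uniform separation estimate for $f$ on a neighborhood.
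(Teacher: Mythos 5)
Your strategy is genuinely different from the paper's, and it is worth recording what the paper actually does before addressing the gap. The paper never upgrades $f$ to a bi-Lipschitz map. Instead, after using the argument of Lemma \ref{global} to make $f$ globally Lipschitz, it forms the self-joining $\Gamma=(\id\times\rho)(\Gamma_1)<G_1\times G_2$ and runs the one-sided half of the computation \eqref{las} from the proof of Proposition \ref{no}: if $f$ is Lipschitz and $\Gamma$ is Zariski dense, then $\lambda(\gamma)\le\lambda(\rho(\gamma))$ for every loxodromic $\gamma\in\Gamma_1$, i.e.\ the limit cone of $\Gamma$ lies in a closed half-plane of $\fa\simeq\br^2$. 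The hypothesis that the Hausdorff dimensions agree enters only through \cite[Corollary 4.2]{KMO}, which places the diagonal direction $(1,1)$ in the \emph{interior} of the limit cone of the Zariski dense self-joining; Benoist's theorem then supplies a loxodromic element with $\lambda(\gamma)>\lambda(\rho(\gamma))$, a contradiction, so $\Gamma$ is not Zariski dense and $\rho$ extends. In other words, all the analytic content of the dimension hypothesis is delegated to that limit-cone theorem rather than to Ahlfors regularity of the Hausdorff measures.

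Your measure-theoretic route has a genuine gap at its decisive step. The covering argument up to ``$\mathcal{H}^\delta(U\setminus A_{\epsilon_0})>0$ for some $\epsilon_0>0$'' is sound, but what it yields is a positive-measure set of points at which the \emph{image diameter} of small balls is bounded below linearly in the radius; as you acknowledge, this does not control $d_{\F_2}(f(\xi),f(\eta))$ for individual pairs, and the proposed repair via equivariance does not close the gap. The relation $f\circ\gamma=\rho(\gamma)\circ f$ transports a lower scaling bound at $\xi_0$ to one at $\gamma\xi_0$ only after multiplying by the ratio of conformal derivatives $|\rho(\gamma)'(f(\xi_0))|/|\gamma'(\xi_0)|$, which is not uniformly bounded below in $\gamma$ --- controlling precisely this ratio is the content of the limit-cone step you are trying to avoid, so the ``uniform quasisymmetry'' appeal is circular. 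That some further input is unavoidable is shown by the model example $f(x)=\int_0^x\mathbf{1}_{[0,1]\setminus C}(t)\,dt$ with $C$ a fat Cantor set: this is a Lipschitz homeomorphism between two Ahlfors $1$-regular sets of equal Hausdorff dimension whose scaled image diameters are bounded below on a set of positive measure, yet it is bi-Lipschitz on no open subinterval, since every subinterval contains density points of $C$ where the lower derivative vanishes. A pointwise two-sided estimate at a single conical point is the hypothesis of Tukia's rank-one rigidity theorem \cite{tukia}, but you do not invoke it, and the Remark following Proposition \ref{noo} records the analogous statement as open in the paper's framework. The rest of your plan --- $\Theta_2=\{\alpha_2\}$ because $\theta_2$ is a singleton, $n_1=n_2$ from the induced diffeomorphism of flag manifolds, and the identification of automorphisms of $\SO^\circ(n+1,1)$ with M\"obius transformations --- is correct, but it sits downstream of the unproved bi-Lipschitz upgrade.
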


\begin{proof} By the proof of Lemma \ref{global}, $f$ is Lipschitz on all of $\La_1$.
Let $\Ga:=(\text{id}\times \rho)(\Ga_1)$ be
the self-joining subgroup of $G=G_1\times G_2$. 
For $i=1,2$, let $\alpha_i$ be the simple root of
$G=G_1\times G_2$ from the $i$-th factor. Then for any 
 loxodromic element 
$g=(\ga, \rho(\ga))\in G$, $\alpha_1(\lambda(g))$ and $\alpha_2(\lambda(g))$ 
are equal to $\lambda(\ga)$ and $\lambda(\rho(\ga))$ respectively.
Suppose that $\Ga$ is Zariski dense in $G$. 
The proof of Proposition \ref{no} for $\Ga$ shows that
if there exists a loxodromic element 
$g=(\ga, \rho(\ga)) \in \Ga$ such that $\alpha_1(\lambda(g))>\alpha_2(\lambda(g))$, then $f:\La_1\to \La_2$ cannot be Lipschitz. 
On the other hand, if $\La_1$ and $\La_2$
have the same Hausdorff dimension, the middle direction $(1,1)\in \fa\simeq\br^2$ is always contained in the interior of
the limit cone of $\Ga$ by \cite[Corollary 4.2]{KMO}. Note that when $\Ga_i$
are cocompact lattices and $n_1=n_2=2$, \cite[Corollary 4.2]{KMO} 
is due to Thurston \cite{Th}.
Therefore, the desired
element $g\in \Ga$ can always be found. This implies that $\Ga$ cannot be Zariski dense in $G$. As before, this implies the conclusion.
\end{proof}

\section{Slim limit sets of $G/P$ for $P$ non-maximal}
 Let $\Ga$ be a Zariski dense subgroup of a connected semisimple real algebraic group $G$. Fix a subset $\theta\subset \Pi$ with 
$\#\theta \ge 2$.
Recall from the introduction that  a subset $S\subset \F_\theta$
is called {\it slim}  if there exists a pair of distinct elements $\alpha_1$ and $\alpha_2$ of $\theta$ such that the limit set
$\La_\theta$ injects to $G/P_{\alpha_1}$
and $G/P_{\alpha_2}$ under the canonical projection map $\F_\theta \to G/P_{\alpha_i}$ for $i=1,2$.

 In this section we prove the following theorem.
 
 \begin{theorem}\label{sl}
    If $\# \theta \ge 2$ and $\La_\theta$ is a slim subset of $\F_\theta$,  then
 no non-empty open subset $U$ of $\La_\theta$ is contained in a proper $C^1$-submanifold of $\F_\theta$.  
\end{theorem}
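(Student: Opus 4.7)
Assume for contradiction that a non-empty open subset $U\subset \La_\theta$ is contained in a $C^1$-submanifold $M\subset \F_\theta$ with $\dim M<\dim \F_\theta$. By the slimness hypothesis, fix two distinct simple roots $\alpha_1,\alpha_2\in \theta$ such that each canonical projection $\pi_i:\F_\theta\to \F_{\alpha_i}$ is injective on $\La_\theta$. Both $\pi_i$ factor through $p:\F_\theta\to \F_\Theta$ where $\Theta=\{\alpha_1,\alpha_2\}$, so $p|_{\La_\theta}$ is injective and is a $\Ga$-equivariant homeomorphism onto $\La_\Theta$. A local application of the constant-rank theorem to $p|_M$ around a point of $U$ produces a $C^1$-submanifold $N\subset \F_\Theta$ containing $p(U)$; $N$ must be proper in $\F_\Theta$, for otherwise $\La_\Theta$ would contain an open subset of $\F_\Theta$, forcing $\La_\Theta=\F_\Theta$ by $\Ga$-minimality, contradicting the fact that the submersion $q_i:\F_\Theta\to \F_{\alpha_i}$ has positive-dimensional fibers and so cannot be injective on $\F_\Theta$. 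Therefore I may replace $\theta$ by $\Theta$ and assume from the start that $\theta=\{\alpha_1,\alpha_2\}$.

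With $\theta=\{\alpha_1,\alpha_2\}$, the diagonal $\F_\theta\hookrightarrow \F_{\alpha_1}\times \F_{\alpha_2}$ is a smooth embedding, so $\ker d\pi_1|_x\cap \ker d\pi_2|_x=\{0\}$ for every $x\in \F_\theta$; hence $(\pi_1,\pi_2)|_M$ is a $C^1$-immersion. Slimness provides the $\Ga$-equivariant homeomorphism $F:=\pi_2\circ (\pi_1|_{\La_\theta})^{-1}:\La_{\alpha_1}\to \La_{\alpha_2}$, and my plan is to show that $F$ is bi-Lipschitz on the open subset $\pi_1(U)\subset \La_{\alpha_1}$. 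By Lemma~\ref{global} this will upgrade to a global $\Ga$-equivariant bi-Lipschitz map, contradicting Proposition~\ref{no} applied to the disjoint pair $\{\alpha_1\},\{\alpha_2\}\subset \Pi$.

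To establish the local bi-Lipschitz property, use Theorem~\ref{ben} and the density of attracting fixed points of loxodromic elements to pick a loxodromic $\ga\in \Ga$ whose attracting fixed point $y\in \La_\theta$ lies in $U$ and whose Jordan projection is generic in the sense that $\alpha_1(\lambda(\ga))\ne \alpha_2(\lambda(\ga))$ (and, e.g., $2\alpha_2(\lambda(\ga))\ne \alpha_1(\lambda(\ga))$). After further shrinking $U$ I may arrange an ambient open $V\ni y$ with $\La_\theta\cap V=U$, so that $\ga^n\xi\in M$ for $\xi\in U$ and $n$ sufficiently large. The key claim is that $d\pi_1|_{T_yM}$ is injective. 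Under the identification $T_y\F_\theta\cong \fg_{-\alpha_1}\oplus \fg_{-\alpha_2}$ with $d\ga|_y$ acting by $\exp(-\alpha_i(\lambda(\ga)))$ on $\fg_{-\alpha_i}$, one has $\ker d\pi_1|_y=\fg_{-\alpha_2}$; a vector $v\in T_yM\cap \fg_{-\alpha_2}$ would, via a second-order Taylor expansion of $\pi_1$ along $M$ and the contraction rates of $d\ga^n$, force the rate of $\pi_1(\ga^n\xi)\to \pi_1(y)$ along the $v$-direction of $M$ to be at best $2\alpha_2(\lambda(\ga))$, contradicting the generic rate $\alpha_1(\lambda(\ga))$ given by Lemma~\ref{lox} for $\xi\in U\cap \cal O$. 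With $d\pi_1|_{T_yM}$ injective, $\pi_1|_M$ is a local $C^1$-embedding near $y$, the composition $F=\pi_2\circ (\pi_1|_M)^{-1}$ is $C^1$ with non-vanishing Jacobian at $\pi_1(y)$, and the exponential-map/Taylor argument in the proof of Proposition~\ref{noo} yields the required local bi-Lipschitz inequality.

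The main obstacle I anticipate is making the dynamical argument for injectivity of $d\pi_1|_{T_yM}$ rigorous: since $U$ is merely open in $\La_\theta$ and may be a very thin (fractal) subset of $M$, a kernel vector in $T_yM\cap \ker d\pi_1|_y$ does not directly obstruct the injectivity of $\pi_1|_U$, and one must carefully relate the slowest eigendirection of $d\ga|_y$ present in $T_yM$ to the rate provided by Lemma~\ref{lox}, with the separation conditions on $\lambda(\ga)$ ensuring that the two rates remain distinguishable and produce the required contradiction.
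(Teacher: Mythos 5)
Your overall strategy --- manufacture a $\Ga$-equivariant bi-Lipschitz map $\La_{\alpha_1}\to\La_{\alpha_2}$ and contradict Proposition \ref{no} --- is the same as the paper's, but the route you take to the bi-Lipschitz property has a genuine gap, which you yourself flag at the end: the injectivity of $d\pi_1|_{T_yM}$ is never established, and the dynamical argument you sketch cannot establish it. If $v\in T_yM\cap\ker d\pi_1|_y$, nothing forces the limit set to accumulate at $y$ along the $v$-direction of $M$; the open set $U$ may approach $y$ only along directions on which $d\pi_1$ is injective, in which case Lemma \ref{lox} applied to points $\xi\in U$ simply returns the rate $\alpha_1(\lambda(\ga))$ and the ``second-order Taylor expansion of $\pi_1$ along $M$'' sees nothing, so no contradiction arises. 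There are also secondary problems: $T_y\F_\theta$ is not $\fg_{-\alpha_1}\oplus\fg_{-\alpha_2}$ but the full nilradical $\mathfrak{n}_\theta^-$, so $\ker d\pi_1|_y$ is a sum of many root spaces with various contraction rates; and in your reduction step, $N$ failing to be proper only means that $N$ is open in $\F_\Theta$, which does not imply that $p(U)\subset N$ contains an open subset of $\F_\Theta$, so that deduction is a non sequitur (fortunately properness of $N$ is not what your later argument actually uses).

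The paper avoids this difficulty entirely by running the rank argument on the limit set rather than on the ambient submanifold. By $\Ga$-minimality every point of $\La_\theta$ lies in some translate $\ga^{-1}U$, so $\La_\theta$ is concluded to be a $C^1$-submanifold of $\F_\theta$; Lemma \ref{dif} then applies the constant-rank theorem to $f=\pi_{\alpha_i}|_{\La_\theta}$, an injective $C^1$ map defined on a compact manifold. At a point of maximal rank the local normal form $f(x^1,\dots,x^m)=(x^1,\dots,x^r,0,\dots,0)$ together with injectivity forces $r=m$, minimality propagates the immersion property over all of $\La_\theta$, and compactness upgrades the injective immersion to a $C^1$-embedding; the resulting composition $f_{\alpha_2}\circ f_{\alpha_1}^{-1}:\La_{\alpha_1}\to\La_{\alpha_2}$ is a diffeomorphism of compact sets, hence bi-Lipschitz, and Proposition \ref{no} finishes. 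This is precisely the step your version cannot reproduce: injectivity of a $C^1$ map on a manifold constrains its derivative, but injectivity on a thin (possibly fractal) subset $U$ of a manifold $M$ does not constrain $d\pi_1$ on $T_yM$. To repair your argument you would need to first promote $\La_\theta$ itself to a $C^1$-manifold via minimality and then apply your rank considerations to $T_y\La_\theta$ rather than $T_yM$; at that point the proof collapses onto Lemma \ref{dif}, and the reduction to $\theta=\{\alpha_1,\alpha_2\}$ becomes unnecessary.
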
 

We first prove the following lemma which connects Theorem \ref{sl}
with Proposition \ref{no}.
\begin{lem}\label{dif} Let $\theta_0\subset \theta\subset \Pi$.
Suppose that $\La_\theta$ is a $C^1$-submanifold of $\F_\theta$ and that the canonical projection $\F_\theta\to \F_{\theta_0}$ 
is injective on $\Lambda_\theta$. Then 
  $\La_{{\theta_0}}$ is a $C^1$-submanifold
    of $\F_{{\theta_0}}$ and $f_{\theta_0}: \La_\theta\to \La_{\theta_0}$ is a $\Ga$-equivariant diffeomorphism.
\end{lem}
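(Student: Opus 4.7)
The plan is to reduce the statement to showing that $f_{\theta_0}$---the restriction to $\La_\theta$ of the canonical smooth $G$-equivariant submersion $\pi:\F_\theta \to \F_{\theta_0}$---is a $C^1$-immersion at every point of $\La_\theta$. The setup reductions are short: by the $\Ga$-minimality statement for $\La_{\theta_0}$ recorded after Definition~\ref{l2}, $\pi(\La_\theta) = \La_{\theta_0}$; combined with the injectivity hypothesis and the compactness of $\La_\theta$, this makes $f_{\theta_0}$ a $\Ga$-equivariant homeomorphism onto $\La_{\theta_0}$; and once the immersion property is in hand, the constant rank theorem (together with bijectivity) simultaneously yields that $\La_{\theta_0}$ is a $C^1$-submanifold of $\F_{\theta_0}$ and that $f_{\theta_0}$ is a $C^1$-diffeomorphism.

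For the immersion property I would consider
\[
\mathcal{S} = \{\xi \in \La_\theta : T_\xi \La_\theta \cap \ker d\pi_\xi \ne 0\}
\]
and first verify two short facts. (i) $\mathcal{S}$ is closed in $\La_\theta$: the dimension of the intersection of the continuously varying subspace $T_\xi \La_\theta$ (continuous because $\La_\theta$ is $C^1$) with the smoothly varying $\ker d\pi_\xi$ is upper semicontinuous in $\xi$. (ii) $\mathcal{S}$ is $\Ga$-invariant: each $\gamma \in \Ga$ acts on $\F_\theta$ as a diffeomorphism preserving $\La_\theta$, and the $G$-equivariance of $\pi$ gives $d\gamma_\xi(\ker d\pi_\xi) = \ker d\pi_{\gamma\xi}$, so $d\gamma_\xi$ restricts to an isomorphism between $T_\xi \La_\theta \cap \ker d\pi_\xi$ and $T_{\gamma\xi}\La_\theta \cap \ker d\pi_{\gamma\xi}$. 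By the $\Ga$-minimality of $\La_\theta$, $\mathcal{S}$ must equal either $\emptyset$ or all of $\La_\theta$.

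The key step---which I expect to be the main obstacle---is to rule out $\mathcal{S} = \La_\theta$. In that scenario $E_\xi := T_\xi \La_\theta \cap \ker d\pi_\xi$ is nonzero everywhere; if $k\ge 1$ denotes the minimum of $\dim E_\xi$ on $\La_\theta$, the non-empty open locus $U$ where this minimum is attained carries a continuous rank-$k$ subbundle $E \subset T\La_\theta|_U$. Pick any non-vanishing continuous local section $v$ of $E$ and regard it as a continuous vector field on the $C^1$-manifold $\La_\theta$; Peano's existence theorem (in a $C^1$-chart of $\La_\theta$) produces a $C^1$ integral curve $\eta:(-\varepsilon,\varepsilon)\to\La_\theta$ with $\eta(0) = \xi_0 \in U$, $\eta'(0) = v_{\xi_0} \ne 0$, and $\eta'(t) \in \ker d\pi_{\eta(t)}$ for every $t$. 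Then $(\pi \circ \eta)' \equiv 0$, so $\pi \circ \eta$ is constant; combined with the injectivity of $\pi|_{\La_\theta}$, this forces $\eta$ itself to be constant, contradicting $\eta'(0) \ne 0$. Hence $\mathcal{S} = \emptyset$, the immersion property holds, and the lemma follows.
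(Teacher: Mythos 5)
Your argument is correct, and it follows the paper's overall skeleton: reduce to showing that $f_{\theta_0}=\pi|_{\La_\theta}$ is a $C^1$-immersion, use upper semicontinuity of the rank of the differential to locate a non-empty open locus of constant rank, and use $\Ga$-minimality of $\La_\theta$ to propagate the conclusion to all of $\La_\theta$. The difference lies in the local step that extracts injectivity of the differential from injectivity of the map. The paper picks a point maximizing $\operatorname{rank} df$ and applies the constant rank theorem: in adapted $C^1$-coordinates $f$ becomes $(x^1,\dots,x^m)\mapsto(x^1,\dots,x^r,0,\dots,0)$, so injectivity of $f$ forces $r=m$ at once, and the (open, $\Ga$-invariant, non-empty) set where $df$ is injective is then all of $\La_\theta$ by minimality. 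You instead pass to the closed invariant set $\mathcal S$ where $\ker df\ne 0$ and, on the locus where the kernel dimension is minimal, integrate a continuous section of the kernel bundle via Peano's theorem to produce a non-constant $C^1$ curve in $\La_\theta$ collapsed by $\pi$, contradicting injectivity. Both are sound and of comparable length; the constant rank theorem route is slightly more economical (no need to verify that the kernel distribution is a continuous subbundle on the constant-dimension locus, nor to invoke ODE existence for merely continuous vector fields on a $C^1$-manifold), while your version makes the geometric obstruction --- a curve in $\La_\theta$ along which $\pi$ is constant --- explicit. Note that your appeal to the minimality dichotomy is genuinely needed: the integral-curve construction requires $\dim(T_\xi\La_\theta\cap\ker d\pi_\xi)\ge 1$ on an open set, which is exactly what the alternative $\mathcal S=\La_\theta$ provides.
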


\begin{proof} For simplicity, we write $\La=\La_\theta$. We suppose that $\La$
is a $C^1$-submanifold of $\F_\theta$. 
Since the projection $\F_\theta\to \F_{\theta_0}$ given by $f(gP_\theta)=gP_{\theta_0}$ is a smooth map, 
its restriction  $f:\La\to \F_{\theta_0}$ is a $C^1$ map which is also injective by
hypothesis. 
We claim that there exists a point $x \in \Lambda$ where $df_x : T_x \Lambda \rightarrow T_{f(x)} \F_{{\theta_0}}$ is injective. Pick a point $x \in \Lambda$ which maximizes ${\rm rank} \, df_y$, $y\in \La$. Then there exists a neighborhood of $x$ in $\Lambda$ where $df$ has constant rank. Then if $r : = {\rm rank} \, df_x$, there exist local coordinates near $x$ where 
$$
f(x^1,\dots, x^m) = (x^1, \dots, x^r,0,\dots, 0).
$$
Since $f$ is injective, we must have $r=m$ and hence $df_x$ is injective. 

Now the set $\{ x \in \Lambda : df_x \text{ is injective}\}$ is open and $\Gamma$-invariant. Since $\Gamma$ acts minimally on $\Lambda$, this set must be all of $\Lambda$. Thus $f$ is an immersion. Since $f$ is an injective immersion and $\Lambda$ is compact, $f$ is a $C^1$-embedding. Hence $f$ is a diffeomorphism onto its image, which is $\La_{\theta_0}$.
In particular, $\La_{\theta_0}$ is a $C^1$-submanifold of $\F_{\theta_0}$.
\end{proof}

\subsection*{Proof of Theorem \ref{sl}}
By the hypothesis on the slimness of $\La_\theta$, there exists a pair of distinct elements $\alpha_1$ and $\alpha_2$ of $\theta$ such that
$\La_\theta$ injects to $G/P_{\alpha_1}$
and $G/P_{\alpha_2}$. 

Suppose on the contrary that some non-empty open subset $U$ of $\La_\theta$ is contained in some $C^1$-submanifold. Since $\La_\theta$ is $\Ga$-minimal, we have that
    for any $\xi\in \La_\theta$, $\Ga \xi$ is dense, so $\ga \xi\in  U$ for some $\ga\in \Ga$.  Since $\xi\in \ga^{-1}U$, it follows that $\La_\theta$ is a $C^1$-submanifold of $\F_\theta$. 
    By Lemma \ref{dif},
    we have $\G$-equivariant diffeomorphisms $f_{\alpha_i}:\La_\theta\to \La_{\alpha_i}$ for each $i=1,2$. Hence 
    $f_{\alpha_2}\circ f_{\alpha_1}^{-1}:\La_{\alpha_1}\to \La_{\alpha_2}$ is a $\Ga$-equivariant diffeomorphism, contradicting
    Proposition \ref{no}. This finishes the proof. 

\begin{Rmk} We remark that Proposition \ref{no} implies that 
if $\La$ is a slim subset of $G/P$, then there exists a maximal parabolic subgroup $Q$ containing $P$ such that
the projection $G/P\to G/Q$ restricted to $\La$  is not bi-Lipschitz.
\end{Rmk} 

  \subsection*{Antipodal groups} Theorem \ref{nm} applies to the class of $P$-antipodal discrete subgroups of $G$, which contains  any subgroup of a $P$-Anosov or a relatively $P$-Anosov subgroup. 
To define an antipodality, we recall that
 a parabolic subgroup $P$ is called reflexive if
its conjugacy class contains a parabolic subgroup $P'$ opposite to $P$, that is, $P\cap P'$ is a common Levi subgroup of both $P$ and $P'$.
For example, a minimal parabolic subgroup of $G$ is always reflexive.
For a parabolic subgroup $P$, let $P_{\op{reflexive}}$ be the largest reflexive parabolic subgroup contained in $P$. If $P=P_\theta$,
then $P_{\op{reflexive}}=P_{\theta\cup \i(\theta)}$.
\begin{Def}\label{anti}
    A discrete subgroup $\Ga$ is called \emph{$P$-antipodal} if its limit set in $G/P_{\op{reflexive}}$ is antipodal in the sense that any two distinct points are in general position.
    \end{Def}
    
    If a discrete subgroup $\Ga$ is $P$-antipodal, then its limit set on $G/P$ injects to $G/P'$ for any $P'$ containing $P$  \cite[Lemma 9.5]{KOW}. Hence if $\Ga$ is $P$-antipodal for a non-maximal parabolic subgroup $P$, then its limit set is a slim subset of $G/P$. Therefore the following corollary is a special case of Theorem \ref{nm}.

\begin{cor} \label{an} 
 Let $G$ be a connected semisimple real algebraic group of rank at least $2$ and $P$  a non-maximal parabolic subgroup of $G$.
     The limit set of a Zariski dense $P$-antipodal  subgroup of $G$ cannot be a $C^1$-submanifold of $G/P$.
\end{cor}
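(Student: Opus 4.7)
The plan is to reduce to Theorem \ref{nm} by showing that every Zariski dense $P$-antipodal subgroup has a slim limit set in $G/P$, and then invoking the regularity statement already established.

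First I would unpack the definition of $P$-antipodality. By Definition \ref{anti}, the limit set of $\Ga$ in $G/P_{\op{reflexive}}$ is antipodal, meaning that any two distinct points are in general position. The cited result \cite[Lemma 9.5]{KOW} then implies that the canonical factor map $G/P\to G/P'$ is injective on the limit set $\La_P(\Ga)$ for every parabolic subgroup $P'$ containing $P$. This is the key input of the argument, and it is the only place where the antipodality hypothesis is really used.

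Next I would use that $P$ is non-maximal to produce the pair of maximal parabolics needed for slimness. Since $P$ is proper and strictly contained in some maximal parabolic, and since $P$ is the intersection of all maximal parabolic subgroups containing it, there must exist at least two non-conjugate maximal parabolic subgroups $P_1, P_2$ containing $P$; otherwise all maximal parabolics containing $P$ would be conjugate to a single $P_i$, forcing $P = P_i$ in the standard normalization. Applying the injectivity statement from the previous paragraph to $P' = P_1$ and $P' = P_2$, I conclude that $\pi_i : G/P \to G/P_i$ is injective on $\La_P(\Ga)$ for $i=1,2$, so by definition the limit set is slim.

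Finally, I would invoke Theorem \ref{nm} directly: a Zariski dense discrete subgroup of a higher rank $G$ cannot have a slim limit set in $G/P$ that is a $C^1$-submanifold. Combined with the slimness established above, this yields the corollary. I do not anticipate any serious obstacle; the main subtlety is simply citing \cite[Lemma 9.5]{KOW} correctly and handling the (essentially trivial) existence of two non-conjugate maximal parabolics above $P$.
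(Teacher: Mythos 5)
Your proposal is correct and follows essentially the same route as the paper: the paper likewise cites \cite[Lemma 9.5]{KOW} to get injectivity of the projections $G/P\to G/P'$ on the limit set, notes that a non-maximal $P$ lies in two non-conjugate maximal parabolics so the limit set is slim, and then invokes Theorem \ref{nm}. No gaps.
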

Note that there are many slim limit sets which are not antipodal (e.g., the limit set of a self-joining group defined in \eqref{self}).

\section{An example}\label{ex}
In this final section, we construct an example of a Zariski dense discrete subgroup of $\SL_8(\br)$ which explains the necessity of introducing $P_2'$ in the conclusion of Theorem \ref{m0} in the case when $P_2$ is not maximal.
The examples we construct are  Borel-Anosov and $(1,1,2)$-hyperconvex subgroups of $\SL_8(\br)$. 

We begin by setting up some notation. For any $d\ge 2$, let $A$ be the diagonal subgroup of
$\SL_d(\br)$ consisting of diagonal elements with positive entries so that $\fa$  and $\fa^+$ can respectively be identified with
$\fa=\{(u_1, \cdots, u_d): \sum_{k=1}^d u_k=0\}$ and
$\fa^+=\{(u_1, \cdots, u_d)\in \fa: u_1\ge \cdots \ge u_d\}$.
For $1\le k\le d-1$, let 
$$\alpha_k((u_1, \cdots, u_d))=u_k-u_{k+1};$$ then
$\Pi=\{\alpha_k:1\le k\le d-1\}$ is the set of all simple roots. For any $g\in \SL_d(\br)$, its Jordan projection $\lambda(g)\in \fa^+$ satisfies 
$$
\alpha_k(\lambda(g)) = \log \frac{\lambda_k(g)}{\lambda_{k+1}(g)}
$$
where $\lambda_1(g) \geq \cdots \geq \lambda_d(g)$ are the absolute values of the eigenvalues of $g$. Also, for $\theta \subset \Pi$, the boundary $\mathcal{F}_\theta = \SL_d(\br)/P_\theta$ coincides with the partial flag manifold consisting of flags with subspaces of dimensions $\{k : \alpha_k \in \theta\}$.  

Let $\Delta$ be a hyperbolic group and denote by $\partial \Delta$ its Gromov boundary. 
Recall from \cite{KP} that a representation $\rho:\Delta \to\SL_d(\mathbb R)$ is
$\{\alpha_k\}$-Anosov if there exist constants $c,C>0$ so that for all $\gamma\in \Delta$,
$$\alpha_k(\lambda(\rho(\gamma))\ge c|\gamma|-C$$
where $|\gamma|$ is
the minimal word length of $\gamma$ with respect to a fixed finite generating set of $\Delta$.
If $\rho$ is $\{\alpha_k\}$-Anosov, it admits a pair of unique  continuous equivariant embeddings $\xi_\rho^k:\partial \Delta \to \op{Gr}_k(\mathbb R^d)$
and $\xi_\rho^{d-k}:\partial \Delta\to \op{Gr}_{d-k}(\mathbb R^d)$. Furthermore, the image of $(\xi^k_\rho,\xi^{d-k}_\rho)$ coincides with the limit set of $\rho(\Delta)$ in $\mathcal{F}_{\{\alpha_k,\alpha_{d-k}\}}$. We say that $\rho$ is Borel-Anosov if it is $\{\alpha_k\}$-Anosov for all
$1\le k\le d-1$. The image of a Borel-Anosov representation is called a Borel Anosov subgroup.

A representation $\rho:\Delta \to \SL_d(\br)$ is 
$(1,1,2)$-hyperconvex  if it is $\{\alpha_1,\alpha_2\}$-Anosov and
for all distinct $x,y,z \in \partial \Delta$,
$$\xi_{\rho}^1(x) \oplus \xi_{\rho}^1(y) \oplus\xi_{\rho}^{d-2}(z)=\mathbb{R}^d.$$

Both being $\{\alpha_k\}$-Anosov and being $(1,1,2)$-hyperconvex are open conditions in the representation variety (see \cite[Proposition 6.2]{PSW}).

\medskip 

\begin{prop}\label{exam} There exists a Zariski dense discrete  subgroup $\Gamma<\SL_8(\br)$
which admits an equivariant Lipschitz bijection $\La_{\alpha_3}\to \La_{\alpha_1}$. Moreover, $\Gamma$ is Borel-Anosov, $(1,1,2)$-hyperconvex, and
the projection map $p:\La_{\{\alpha_1, \alpha_3\}}\to \La_{\alpha_3}$
is a bi-Lipschitz bijection.
\end{prop}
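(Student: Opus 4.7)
The plan is to construct $\Gamma$ as the image of a Hitchin-type representation $\rho: \Delta \to \SL_8(\mathbb{R})$ obtained by deforming the Hitchin--Fuchsian representation of a closed surface group $\Delta$. First, I will take a Fuchsian representation $\sigma: \Delta \to \SL_2(\mathbb{R})$ and set $\rho_0 = \mathrm{Sym}^7 \circ \sigma: \Delta \to \SL_8(\mathbb{R})$, the Hitchin--Fuchsian representation. Since $\mathrm{Sym}^7$ has weights $\pm 1, \pm 3, \pm 5, \pm 7$, for every $\gamma \in \Delta$ one has $\alpha_k(\lambda(\rho_0(\gamma))) = 2\log|\mu_1(\sigma(\gamma))|$ for every simple root $\alpha_k$; in particular $\alpha_1 = \alpha_3$ identically, so the limit cone of $\rho_0$ is a single ray lying in the hyperplane $\{\alpha_1 = \alpha_3\} \subset \mathfrak{a}$. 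The representation $\rho_0$ is Borel--Anosov (by Labourie) and $(1,1,2)$-hyperconvex (since Hitchin representations are so, by PSW), but it is not Zariski dense because its image sits in the principal $\SL_2$-subgroup.

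Next, I will deform $\rho_0$ within the Hitchin component $\mathrm{Hit}_8(\Delta)$ to a representation $\rho$ satisfying (a) $\rho(\Delta)$ is Zariski dense in $\SL_8(\mathbb{R})$, and (b) the limit cone of $\rho$ lies in the closed half-space $\{\alpha_1 \geq \alpha_3\}$. Condition (a) is generic: the Zariski-reducible locus in $\mathrm{Hit}_8(\Delta)$ is a proper closed subvariety, since the possible proper Lie subgroups containing a Hitchin representation (principal $\SL_2$, $\mathrm{PSp}_8$, $\mathrm{PSO}(4,4)$, etc.) form lower-dimensional subloci. Condition (b) is a linearized half-space condition on the deformation direction, requiring $(\alpha_1 - \alpha_3)(d\lambda_\gamma/dt)|_{t=0} \geq 0$ for all loxodromic $\gamma$ at first order. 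Both conditions can be satisfied simultaneously in a small neighborhood of $\rho_0$, since the tangent space $T_{\rho_0} \mathrm{Hit}_8(\Delta)$ is large enough to accommodate them. By openness of the Borel--Anosov and $(1,1,2)$-hyperconvex conditions (see \cite[Proposition~6.2]{PSW}), $\rho$ retains both properties.

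To verify the bi-Lipschitz property of $p: \Lambda_{\{\alpha_1,\alpha_3\}} \to \Lambda_{\alpha_3}$, I will apply a uniform version of the Anosov contraction estimate from Lemma~\ref{lox}: for $x,y$ close in $\partial \Delta$ there exists an element $\gamma \in \Delta$ with
\begin{equation*}
d_{\mathcal{F}_{\alpha_k}}(\xi^k(x),\xi^k(y)) \asymp \exp\bigl(-\alpha_k(\mu(\gamma))\bigr), \qquad k=1,3,
\end{equation*}
where $\mu(\gamma)/\|\mu(\gamma)\|$ lies within bounded distance of the limit cone. Since $\alpha_1 \geq \alpha_3$ on the limit cone by (b), this yields $d(\xi^1(x),\xi^1(y)) \lesssim d(\xi^3(x),\xi^3(y))$ uniformly, so $p$ is bi-Lipschitz. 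The equivariant Lipschitz bijection $\Lambda_{\alpha_3} \to \Lambda_{\alpha_1}$ is then produced as $\pi_1 \circ p^{-1}$, where $\pi_1: \mathcal{F}_{\{\alpha_1,\alpha_3\}} \to \mathcal{F}_{\alpha_1}$ is the smooth (and hence Lipschitz) canonical projection.

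The hard part will be condition (b) in the deformation step: ensuring that the limit cone stays in the half-space $\{\alpha_1 \geq \alpha_3\}$ while simultaneously achieving Zariski density. This requires a first-order analysis of how Jordan projections of individual loxodromic elements vary in $\mathrm{Hit}_8(\Delta)$, exploiting that $\rho_0$ sits on the boundary hyperplane $\{\alpha_1 = \alpha_3\}$ and that the tangent space at $\rho_0$ contains deformation directions satisfying the half-space condition while pushing out of every proper closed Lie subgroup of $\SL_8(\mathbb{R})$ that can contain a Hitchin representation.
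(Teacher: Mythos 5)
There is a genuine gap in the deformation step, and it is precisely the point your proposal defers to "the hard part." Your base point $\rho_0=\mathrm{Sym}^7\circ\sigma$ has $\alpha_1(\lambda(\rho_0(\gamma)))=\alpha_3(\lambda(\rho_0(\gamma)))$ \emph{exactly}, for every $\gamma$, so there is no room: by Benoist's theorem the limit cone of any Zariski dense deformation must acquire non-empty interior, i.e.\ it must open up from the single ray $\{\alpha_1=\cdots=\alpha_7\}$, and you need it to open up only into $\{\alpha_1\ge\alpha_3\}$. The first-order condition $(\alpha_1-\alpha_3)(d\lambda_\gamma/dt)|_{t=0}\ge 0$ is an infinite family of linear inequalities, one per conjugacy class, all active (equalities) at $\rho_0$; you give no argument that a single tangent direction satisfies all of them while simultaneously escaping every proper Zariski closure, and even if it did, first-order control of individual Jordan projections does not control the limit cone, which is a limit object over infinitely many elements where second-order terms and non-uniformity in $\gamma$ dominate. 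Worse, even the conclusion you aim for is too weak: containment of the limit cone in the \emph{closed} half-space only gives $(\alpha_1-\alpha_3)(\lambda(\gamma))\ge -\epsilon\|\lambda(\gamma)\|$ eventually, whereas the Lipschitz estimate for $\xi^1\circ(\xi^3)^{-1}$ requires the element-wise inequality $\alpha_1(\lambda(\gamma))\ge\alpha_3(\lambda(\gamma))$ for all $\gamma$ (this is what feeds into Tsouvalas's H\"older-exponent theorem), or equivalently a uniform additive lower bound on $(\alpha_1-\alpha_3)(\mu(\gamma))$; your $\asymp$ estimates hide multiplicative constants that the half-space condition alone cannot absorb. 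Your claim that Zariski density is generic in the Hitchin component is plausible but also asserted rather than proved.

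The paper avoids all of this by never starting on the wall $\{\alpha_1=\alpha_3\}$: it takes a free group $\Delta$, two convex cocompact representations $\tau_1,\tau_2$ into $\SL_2(\br)$ with $\tau_2(a_i)=\tau_1(a_i)^N$ so that $\alpha_1(\lambda(\tau_2(\gamma)))\ge 4\,\alpha_1(\lambda(\tau_1(\gamma)))$, composes with the irreducible $\SL_2\to\SL_4$, and forms $\Phi_0=\rho_1\oplus\rho_2$ into $\SL_8(\br)$. The resulting definite multiplicative gap makes $\zeta_0^1\circ(\zeta_0^3)^{-1}$ a $4$-H\"older map by Tsouvalas's theorem, and a small Zariski dense Borel--Anosov deformation (freely available since $\Delta$ is free) only degrades the exponent by factors in $[2/3,3/2]$, leaving a $\tfrac{16}{9}$-H\"older, hence Lipschitz, map. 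If you want to salvage your route, you would need to replace the Fuchsian base point by one already in the open region $\{\alpha_1>\alpha_3\}$ with a uniform gap, at which point you have essentially reconstructed the paper's argument.
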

Theorem \ref{m0} in this case applies with $f=p^{-1}$, $P_1=P_{\alpha_3}$, $P_2=P_{\{\alpha_1, \alpha_3\}}$ and
$P_2'=P_{\alpha_3}$.

Let  $\Delta=\langle a_1,a_2\rangle$ be the free group with two generators $a_1, a_2$. 
Let $N\ge 2$.  
Let $\tau_1:\Delta\to \SL_2(\mathbb R)$ be a convex cocompact representation and  
$\tau_2:\Delta\to \SL_2(\mathbb R)$ 
be defined so that $\tau_2(a_i)=\tau_1(a_i)^N$ for $i=1,2$. We may choose $N$ large enough that 
$$   \frac{\alpha_1(\lambda(\tau_2(\ga)))}{\alpha_1(\lambda(\tau_1(\ga)))}\ge 4 \quad\text{for all non-trivial} \quad \gamma\in \Delta.$$

Let $\iota:\SL_2(\mathbb R)\to\SL_4(\mathbb R)$ be an irreducible 
representation, which is unique up to conjugations. Then each $\rho_i=\iota\circ\tau_i$ is a positive
representation and hence Borel Anosov and  $(1,1,2)$-hyperconvex 
\cite[Corollary 6.13]{PSW}. One easily checks that 
$\frac{\alpha_1(\lambda(\rho_2(\ga)))}{\alpha_1(\lambda(\rho_1(\ga)))}\ge 4 $ for all non-trivial $\gamma\in \Delta$. Then a theorem of Tsouvalas \cite[Theorem 1.9]{Tsouvalas} implies that
$f_{\rho_1, \rho_2}=\xi_{\rho_2}^1\circ ( \xi_{\rho_1}^{1})^{-1}$ is $4$-H\"older. Let $\Phi_0 : \Delta\rightarrow \SL_8(\mathbb{R})$ denote the representation given by the direct sum $\Phi_0 = \rho_1\oplus \rho_2$. 
One checks that 
\begin{align*}
 \lambda_1(\rho_2(\gamma)) > \lambda_2(\rho_2(\gamma))> \lambda_1(\rho_1(\gamma))>\cdots >\lambda_4(\rho_1(\gamma)) >  \lambda_3(\rho_2(\gamma))> \lambda_4(\rho_2(\gamma))
\end{align*}
for all non-trivial $\gamma\in \Delta$  and that $\Phi_0$ is 
Borel Anosov with limit maps given by
$$
\zeta_0^{k}(x) = 
\begin{cases}
\{0\} \oplus \xi_{\rho_2}^{k}(x) & \text{if } k=1,2 \\
\xi_{\rho_1}^{k-2}(x) \oplus \xi_{\rho_2}^{2}(x) & \text{if } k=3,4,5 \\
\mathbb{R}^4 \oplus \xi_{\rho_2}^{k-4}(x) & \text{if } k=6,7.
\end{cases}.
$$
Then, the fact that $f_{\rho_1,\rho_2}$ is $4$-H\"older implies that $\zeta_0^1\circ(\zeta_0^3)^{-1}$ is also $4$-H\"older. In particular, 
$\zeta_0^1\circ(\zeta_0^3)^{-1}:\Lambda_{\alpha_3}(\Phi_0(\Delta))\to\Lambda_{\alpha_1}(\Phi_0(\Delta))$ is Lipschitz.

However, $\Phi_0(\Delta)$ is not Zariski dense. 
Since $\Delta$ 
is the free group on two generators, 
there exists an arbitrary small deformation
$\Phi : \Delta \rightarrow \SL_8(\mathbb{R})$ of $\Phi_0$  which is Borel Anosov with Zariski dense image.
Arguing exactly as in~\cite[Section 9]{ZZ}, one can show that  
$\Phi_0$ and $\wedge^3 \Phi_0$ are both $(1,1,2)$-hyperconvex. 
Therefore, we may assume that $\Phi$ and $\wedge^3\Phi$ are both $(1,1,2)$-hyperconvex.  

One may then use standard techniques (cf. \cite{ZZ})
to show that if $\Phi$ is
sufficiently close to $\Phi_0$, then
$$\frac{2}{3}\le \frac{\alpha_1(\lambda(\Phi(\gamma)))}{\alpha_1(\lambda (\Phi_0(\gamma)))}\le \frac{3}{2}\quad \text{and}
\quad \frac{2}{3}\le \frac{\alpha_1(\lambda(\wedge^3\Phi(\gamma)))}{\alpha_1(\lambda(\wedge^3\Phi_0(\gamma)))}\le \frac{3}{2}$$
for all non-trivial $\gamma\in\Delta$.
Let $\zeta=(\zeta^k)$ be the limit map of $\Phi(\Delta)$ and 
$\hat\zeta_0^1:\partial\Delta\to\Lambda_{\alpha_1}(\wedge^3\Phi_0(\Delta))$
and
$\hat\zeta^1:\partial \Delta\to \Lambda_{\alpha_1}(\wedge^3\Phi(\Delta))$ be limit maps of $\wedge^3\Phi_0$ and $\wedge^3\Phi$.
One may again apply Tsouvalas's theorem  \cite[Theorem 1.9]{Tsouvalas} to conclude that
$\zeta^1\circ(\zeta_0^1)^{-1}$ and $\hat\zeta_0^1\circ(\hat\zeta^1)^{-1}$ are $\frac{2}{3}$-H\"older.
There is a $C^1$-equivariant identification of 
$\Lambda_{\alpha_1}(\wedge^3\Phi_0(\Delta))$ with 
$\Lambda_{\alpha_3}(\Phi_0(\Delta))$ 
and an analogous identification for $\Phi$, so we may conclude that 
$\zeta_0^3\circ (\zeta^3)^{-1}$ is $\frac{2}{3}$-H\"older. Now set $$\Gamma:=\Phi(\Delta)<\SL_8(\br).$$ Then the limit map
$$\zeta^1\circ(\zeta^3)^{-1}=\left(\zeta^1\circ(\zeta_0^1)^{-1}\right)\circ\left( \zeta_0^1\circ(\zeta_0^3)^{-1}\right)\circ \left( \zeta_0^3\circ (\zeta^3)^{-1}\right)$$
is a $\frac{16}{9}$-H\"older and hence yields a Lipschitz map from $\La_{\alpha_3}$ to $ \La_{\alpha_1}$. Since $\Gamma$ is Borel Anosov, the projection map
$\La_{\{\alpha_1, \alpha_3\}}\to \La_{\alpha_3}$ is now
a bi-Lipschitz homeomorphism.
This proves Proposition \ref{exam}.

\end{document}